\newtheorem{theorem}{Theorem}[section]
\newtheorem{corollary}[theorem]{Corollary}
\newtheorem{lemma}[theorem]{Lemma}
\theoremstyle{remark}
\newtheorem{remark}[theorem]{Remark}
\theoremstyle{definition}
\newtheorem{definition}[theorem]{Definition}
\newtheorem{example}[theorem]{Example}
\newcommand{\ran}{\mbox{ran}}
\newcommand{\bq}{\begin{equation}}
\newcommand{\eq}{\end{equation}}
\newcommand{\beqn}{\begin{eqnarray*}}
\newcommand{\eeqn}{\end{eqnarray*}}
\newcommand{\beq}{\begin{eqnarray}}
\newcommand{\eeq}{\end{eqnarray}}
\newcommand{\bc}{\begin{centre}}
\newcommand{\ec}{\end{centre}}
\newcommand{\ba}{\begin{array}}
\newcommand{\ea}{\end{array}}
\newcommand{\inp}[2]{\langle{#1},\,{#2} \rangle}
\renewcommand{\Delta}{{\nabla}}
\def \B{\mathcal{B}}
\def \D{\mathbb{D}}
\def \C{\mathbb{C}}
\def \F{\mathbb{F}}
\def \H{\mathcal{H}}
\def \S{\mathcal{S}}
\def \ZP{\mathbb{Z}_+}
\def \vp{\varphi}
\newcommand{\clb}{\mathcal{B}}
\newcommand{\cle}{\mathcal{E}}
\newcommand{\clh}{\mathcal{H}}
\newcommand{\clk}{\mathcal{K}}
\newcommand{\clm}{\mathcal{M}}
\newcommand{\cls}{\mathcal{S}}
\newcommand{\clw}{\mathcal{W}}
\newcommand{\n}{\bm{n}}
\newcommand{\m}{\bm{m}}
\newcommand{\z}{\bm{z}}
\newcommand{\w}{\bm{w}}
\newcommand{\BS}{\bm{S}}
\newcommand{\BR}{\bm{R}}
\newcommand{\BB}{\bm{B}}
\newcommand{\BW}{\bm{W}}
\newcommand{\s}{\bm{S}}
\newcommand{\M}{\bm{M}}
\newcommand{\J}{\bm{J}}
\newcommand{\raro}{\rightarrow}
\newcommand{\BP}{\bm{\Phi}}
\newcommand{\mf}{\mathfrak}
\newcommand*{\Le}{\leqslant}
\begin{document}

\title[Submodules in polydomains and noncommutative varieties]{Submodules in polydomains and noncommutative varieties}

\author[Das]{Susmita Das}
\address{Indian Statistical Institute, Statistics and Mathematics Unit, 8th Mile, Mysore Road, Bangalore, 560059,
India}
\email{susmita\_rs@isibang.ac.in}

\author[Pradhan]{Deepak Kumar Pradhan}
\address{Indian Statistical Institute, Statistics and Mathematics Unit, 8th Mile, Mysore Road, Bangalore, 560059,
India}
\email{deepak12pradhan@gmail.com}

\author[Sarkar]{Jaydeb Sarkar}
\address{Indian Statistical Institute, Statistics and Mathematics Unit, 8th Mile, Mysore Road, Bangalore, 560059,
India}
\email{jay@isibang.ac.in, jaydeb@gmail.com}

\subjclass[2010]{47A15, 30H05, 32A10, 46L40, 46L06, 47A13}

\keywords{Invariant subspaces, Fock space, noncommutative polyballs, Toeplitz operators, multi-analytic operators, noncommutative varieties, Drury-Arveson space}

\begin{abstract}
Tensor product of Fock spaces is analogous to the Hardy space over the unit polydisc. This plays an important role in the development of noncommutative operator theory and function theory in the sense of noncommutative polydomains and noncommutative varieties. In this paper we study joint invariant subspaces of tensor product of full Fock spaces and noncommutative varieties. We also obtain, in particular, by using techniques of noncommutative varieties, a classification of joint invariant subspaces of $n$-fold tensor products of Drury-Arveson spaces.
\end{abstract}


\maketitle

\section{Introduction}

In this paper we study certain invariant subspaces (that is,
submodules) of tuples of operators (that is, Hilbert modules over
unital and associative free algebras generated by noncommuting
variables) in the setting of noncommutative operator theory and
noncommutative varieties. The noncommutative operator theory was
introduced in the middle eighties by Frazho \cite{F1, F2} and Bunce
\cite{B1} (see, however, Taylor \cite{JLT}). However, in the late
eighties, a more systematic formalism of noncommutative operator
theory began with the work of Popescu on isometric dilations and
analytic models of infinite sequences of noncommuting operators (see
\cite{Po89-1, Po89-2} and then \cite{Po06, G10, P18} and the
references therein). Popescu's noncommutative operator theory has a wide
range of applications in different context and research areas as,
free analysis and matrix convex sets \cite{PSS}, Hilbert
$C^*$-modules \cite{MS}, moment problem and Cuntz algebras \cite{DP} and operator algebras \cite{DK1, DK} and
multivariable operator theory and function theory \cite{Ar}, just to
name a few. On the other hand, the theory of noncommutative
varieties, again introduced by Popescu \cite{Po06}, establishes a
fundamental connection between noncommutative and commutative operator theory and function theory in several complex variables.

In the setting of noncommutative operator theory, noncommutative
polydomains and noncommutative varieties, introduced by Popescu in
\cite{Po13, Po15}, are analogue of polydisc in $\mathbb{C}^n$ .
Popescu's theory of polydomains can be seen as an attempt to unify
the function theory and multivariable operator theory (both
commutative and noncommutative) on the open unit ball and polydisc
like domains in $\C^n$.

The goal of the present paper is to examine a general technique for
characterizing joint invariant subspaces of the noncommutative Hardy
space on noncommutative polydomains and noncommutative varieties.

We emphasize that the notion of Fock space (we also call Fock
module) that plays the central role in noncommutative operator
theory and used in the free analytic models also plays significant
role in noncommutative polydomains. Here one actually needs to deal
with the tensor products of Fock spaces. From this point of view, in
this paper, we characterize invariant subspaces of tensor products
of Fock spaces. In order to be more specific, here we introduce the notions of Fock module, Fock $\n$-modules and multi-analytic maps, the most necessary technical background for the study of noncommutative multivariable operator theory, and refer the reader to Section \ref{section - Preliminaries} for a more detailed discussion.

Throughout this article, $n$ and $k$ will denote natural numbers and
$\bm{n} = (n_1, \ldots, n_k)$ will denote a $k$-tuple of natural
numbers. Consider the $n$-dimensional Hilbert space $\mathbb{C}^n$
with the standard orthonormal basis $\{e_1, \ldots, e_n\}$. The
\textit{Fock module} $F^2_n$ is defined by
\[
F^2_n = \bigoplus_{m\in \ZP} (\mathbb C^n)^{\otimes m},
\]
where $(\mathbb C^n)^{\otimes 0} = \mathbb C$ and $(\mathbb
C^n)^{\otimes m}$ is the $m$-fold Hilbert space tensor product of
$\C^n$. Define the {\it left creation operators} $S_1, \ldots, S_n$
on $F^2_n$ by $S_i f := e_i \otimes f$, $f \in F^2_n$. It is easy to
see that
\[
S^*_iS_j= \delta_{i,j}I_{F^2_n},
\]
for all $i, j = 1, \ldots, n$, that is, $(S_1, \ldots, S_n)$ is an
$n$ tuple noncommuting isometries with orthogonal ranges. Similarly,
we define the {\it right creation operators} $(R_1, \ldots, R_n)$ by
setting $R_i f = f \otimes e_i$, $f \in F^2_n$. \noindent The
\textit{Fock $\n$-module} $F^2_{\n}$ is defined by
\[
F^2_{\bm{n}} = F^2_{n_1} \otimes \cdots \otimes F^2_{n_k}.
\]
Now for each $i \in \{1, \ldots, k\}$, we denote the $n_i$-tuple of creation operators on $F^2_{n_i}$ (instead of $(S_1, \ldots, S_{n_i})$) by
\[
S_{n_i} = (S_{i 1}, \ldots, S_{i n_i}).
\]
Then, for each $j \in \{1, \ldots, n_i\}$ we define the operator $\BS_{ij}$ acting on the Fock $\n$-module $F^2_{\n}$ by setting
\[
\BS_{ij} := I_{F^2_{n_1}} \otimes \cdots \otimes I_{F^2_{n_{i-1}}} \otimes S_{i j} \otimes I_{F^2_{n_{i+1}}} \otimes \cdots I_{F^2_{n_k}}.
\]
It is now evident that $\s_{ij} \s_{pq} = \s_{pq} \s_{ij}$ and $\s_{ij}^* \s_{pq} = \s_{pq} \s_{ij}^*$ for all $1 \leq i < p \leq k$, $j = 1, \ldots, n_i$, and $q = 1, \ldots, n_p$. In other words, for each $i = 1, \ldots, k$,
\[
\BS_{n_i} :=(\BS_{i1}, \ldots, \BS_{i n_i}),
\]
is an $n_i$-tuple of noncommuting isometries with orthogonal ranges acting on the Fock $\n$-module $F^2_{\n}$. We set the $k$-tuple of tuples of noncommuting isometries $\BS$ as
\[
\BS = (\BS_{n_1}, \ldots, \BS_{n_k}).
\]
Finally, for Hilbert spaces $\cle$ and $\cle_*$, an operator $\Theta: F^2_n \otimes \cle \rightarrow F^2_n \otimes \cle_*$ is called \textit{multi-analytic} if
\[
\Theta (S_i \otimes I_{\cle}) = (S_i \otimes I_{\cle_*}) \Theta \quad \quad (i = 1, \ldots, n).
\]
The set of all such multi-analytic operators will be denoted by $R^{\infty}_{n} \overline \otimes \clb(\cle, \cle_*)$.

In this paper we classify joint invariant subspaces
of $\BS$. We also aim to illustrate our ideas in the setting of
noncommutative varieties (see Section \ref{section-variety}). For instance, one of our main results, Corollary \ref{second-BLHT}, states the following: Let $\clm$ be a closed subspace of the Fock $\n$-module $F^2_{\n}$. Let
\[
\cle = \clm \ominus \sum_{j=1}^{n_1} \BS_{1j} \clm,
\]
and $\cle_{\n} = F^2_{n_2} \otimes \cdots \otimes F^2_{n_k}$. Then $\clm$ is a submodule of $F^2_{\n}$ (that is, $X \clm \subseteq \clm$ for all $X \in \BS_{n_i}$, $i=1, \ldots, k$) if and only if there exist an isometric (also known as inner) multi-analytic operator $\Theta \in R^{\infty}_{n_1} \overline \otimes \clb(\cle, \cle_{\n})$ and multi-analytic operators $\Phi_{ij} \in R_{n_1}^{\infty} \overline \otimes \clb(\cle)$, for all $i = 2, \ldots, k$ and $j= 1, \ldots, n_i$, such that
\[
\mathcal M = \Theta (F^2_{n_1}\otimes \mathcal E) \quad \mbox{and} \quad \BS_{ij} \Theta = \Theta \Phi_{ij}.
\]
In this case, the multi-analytic operators $\Phi_{ij} \in R_{n_1}^{\infty} \overline \otimes \clb(\cle)$, $i = 2 , \ldots, k$ and $j = 1, \ldots, n_i$, are uniquely determined by $\clm$ (see Corollary \ref{second-BLHT} and Remark \ref{remark-13} for more details). Moreover, for each $i=2, \ldots, k$, the $n_i$-tuple $\BP_{n_i} = (\Phi_{i1}, \ldots, \Phi_{in_i})$ on $F^2_{n_1} \otimes \cle$ is a pure isometric tuple (see Remark \ref{remark-13}), whereas the restriction tuple
\[
(\BS_{n_1}|_{\clm}, \BS_{n_2}|_{\clm}, \ldots, \BS_{n_k}|_{\clm}) \; \mbox{on} \; \clm,
\]
and the pure isometric tuple
\[
(\BS_{n_1} \otimes I_{\cle}, \BP_{n_2}, \ldots, \BP_{n_k}) \; \mbox{on} \; F^2_{n_1} \otimes \cle,
\]
are jointly unitarily equivalent.

We then pass from Fock $\n$-module to constrained Fock $\n$-modules (a special class of quotient modules of the Fock $\n$-module) in the setting of noncommutative varieties and prove analogous results (for instance, see Corollary \ref{constrained-submodule}) for invariant subspaces of constrained tuples (compressions of the tuple of creation operators $\BS$ on $F^2_{\n}$ to constrained Fock $\n$-modules). In particular, in Corollary \ref{corollary-DA cor 1}, we present a classification of joint invariant subspaces \textit{Drury-Arveson $\n$-module} $H^2_{n_1} \otimes \cdots \otimes H^2_{n_k}$, the $k$-fold tensor products of Drury-Arveson spaces $\{H^2_{n_i}\}_{i=1}^k$.

At the present stage, we feel it is worthwhile to note that the problem of describing invariant subspaces of constrained tuples in the setting of noncommutative varieties is somewhat more challenging (essentially due to the non-uniqueness issue of the commutant lifting theorem, see Section \ref{section-DA modules}) and, on the other hand, our results are perhaps more definite for Drury-Arveson $\n$-module $H^2_{n_1} \otimes \cdots \otimes H^2_{n_k}$ (see Corollary \ref{corollary-DA cor 2}).

Part of the present investigation may be regarded as a generalization of some of the results concerning invariant subspaces of the Hardy module $H^2(\D^n)$ over unit polydisc $\D^n$ \cite{MMSS}. Besides, some of our ideas and arguments are inspired by the approach of \cite{MMSS}. However, we wish to point out that even in the particular case of $H^2(\D^n)$, our results are slightly different and also somewhat more convenient from that of \cite{MMSS} (see the final paragraph in Section \ref{section-submodules}).

In what follows, we will use the standard terminology of Hilbert modules. In
particular, this setting is more convenient and economic to deal
with the techniques involved in the results presented here.

This rest of the paper is organized as follows. Section \ref{section
- Preliminaries} contains preliminary notions, such as Fock module,
Hilbert modules over unital and associative free algebras,
multi-analytic and multi-coanalytic operators, and some basic
observations. Section \ref{section-commutators} contains a
convenient version of the proof of the noncommutative Beurling, Lax
and Halmos theorem and a relevant result concerning representations
of commutators of pure isometric tuples. Section
\ref{section-submodules} presents a classification result of joint
invariant subspaces of the Fock $\n$-module $F^2_{\bm{n}}$. Section
\ref{section-variety} discusses our approach of invariant subspaces
to noncommutative varieties and tensor product of noncommutative
varieties. Section \ref{section-DA modules} is devoted to the study
of submodules of Drury Arveson $\n$-modules. In particular, we
present explicit analytic descriptions of certain multi-analytic
maps associated with submodules of Drury-Arveson $\n$-modules.
Section \ref{concluding remarks} present an example of
dimension inequality of fibres of the noncommutative Beurling, Lax and Halmos theorem which also, in particular, show that certain natural generalizations of the classical results are not possible in noncommutative operator theory. This section also offers some concluding remarks.

\section{Preliminaries and basic observations}\label{section - Preliminaries}

Given two Hilbert spaces $\H_1$ and $\H_2$, the set of bounded
linear operators from $\H_1$ to $\H_2$ will be denoted by $\B(\H_1,
\H_2)$. If $\H_1 = \H_2$, then we shall write $\B(\H_1)$ for
$\B(\H_1, \H_1)$. Given an $n$-tuple $X = (X_1, \ldots, X_n)$ on
$\clh$, we define a completely positive map $Q_{X}: \mathcal B(\clh)
\rightarrow \mathcal B(\clh)$ (see \cite{Paul02}) by
\[
Q_{X}(A) := \sum_{i=1}^{n} X_i A X_i^* \quad \quad (A \in \clb(\mathcal H)).
\]
We say that $X$ is a \textit{row contraction} if $(I_{\clb(\clh)} - Q_X)(I_{\clh}) \geq 0$, or equivalently
\[
\sum_{i=1}^n X_i X_i^* \leq I_{\clh}.
\]
Therefore, if we denote
\[
\clb(\clh)^n = \{X = (X_1, \ldots, X_n): X_1, \ldots, X_n \in \clb(\clh)\},
\]
then, the set of all row contractions on $\clh$, given by
\[
{\mathfrak B}^{(n)}(\mathcal H): = \{X \in \clb(\H)^{n} :  (I_{\clb(\clh)} - Q_{X})(I_{\mathcal H}) \geq 0 \},
\]
is the noncommutative unit ball in $\clb(\H)^{n}$. Also it is easy
to see that if $X \in {\mathfrak B}^{(n)}(\mathcal H)$, then
\[
I_{\mathcal H} \geq Q_{X}(I_{\mathcal H}) \geq Q^{2}_{X}(I_{\mathcal H}) \geq \ldots \geq 0,
\]
which allows one to define a self-adjoint and contraction operator
$Q_X^{\infty}$ in $\clb(\clh)$ as
\[
Q^{\infty}_X := \mbox{SOT} -\lim_{l \rightarrow \infty} Q_{X}^l(I_{\mathcal H}).
\]
Note also that
\[
Q_{X}^m(I_{\mathcal H}) = \sum_{\overset{|\alpha| =
m}{\alpha \in F^+_{n}}}X^{\alpha} X^{*\alpha},
\]
where $F_n^+$ denotes the unital free semigroup on $n$ generators
$g_1, \ldots, g_n$ and the identity $e$, $X^{\alpha} = X_{i_1}
\cdots X_{i_m}$ and $|\alpha| = m$ for all $\alpha =  g_{i_1}\cdots
g_{i_m} \in F_n^+$. The following is now immediate:

\begin{lemma}\label{pure-criterion}
$Q^{\infty}_X=0$ if and only if $\mbox{SOT-}\displaystyle\lim_{m
\rightarrow \infty} \displaystyle \sum_{\overset{|\alpha| =
m}{\alpha \in F^+_{n}}}X^{\alpha} X^{*\alpha} = 0$.
\end{lemma}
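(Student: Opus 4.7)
The statement is essentially a tautology once the explicit formula $Q_X^m(I_\H) = \sum_{|\alpha|=m,\,\alpha\in F_n^+} X^\alpha X^{*\alpha}$ is in hand, so my plan is just to unpack the two definitions and match them. I would begin by reminding the reader of the definition $Q_X^\infty := \mathrm{SOT}\text{-}\lim_{l\to\infty} Q_X^l(I_\H)$, which is well-defined because of the decreasing chain $I_\H \geq Q_X(I_\H) \geq Q_X^2(I_\H) \geq \cdots \geq 0$ guaranteed by $X \in \mathfrak{B}^{(n)}(\H)$. Since the net $\{Q_X^l(I_\H)\}_{l\in \N}$ is a decreasing net of positive contractions, its SOT-limit coincides with its strong infimum; in particular the limit is the zero operator if and only if the original net tends strongly to zero.

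Next I would recall the expansion
\[
Q_X^m(I_\H) = \sum_{\substack{|\alpha|=m\\ \alpha \in F_n^+}} X^\alpha X^{*\alpha},
\]
already noted in the paragraph preceding the lemma (this identity is a straightforward induction on $m$: the base case is $Q_X(I_\H) = \sum_{i=1}^n X_i X_i^*$, and the inductive step follows from applying $Q_X$ termwise to $Q_X^{m-1}(I_\H)$ and re-indexing words of length $m$). Substituting this expression into the SOT-limit yields
\[
Q_X^\infty = \mathrm{SOT}\text{-}\lim_{m\to\infty} \sum_{\substack{|\alpha|=m\\ \alpha\in F_n^+}} X^\alpha X^{*\alpha},
\]
so $Q_X^\infty = 0$ if and only if the right-hand SOT-limit is zero, which is exactly the claim.

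There really is no obstacle here; the entire content of the lemma is the combination of two definitions given just above it. The only thing to state carefully is that both sides are SOT-limits of the \emph{same} decreasing sequence of positive operators, so the two ``zero'' conditions are identical, not merely equivalent by some nontrivial argument. I would therefore present the proof as a two-line justification rather than as a separate argument, and move on.
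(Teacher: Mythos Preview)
Your proposal is correct and matches the paper's approach exactly: the paper simply declares the lemma ``immediate'' from the preceding formula $Q_X^m(I_{\mathcal H}) = \sum_{|\alpha|=m} X^\alpha X^{*\alpha}$ and the definition of $Q_X^\infty$, without giving a separate proof. Your write-up is a faithful unpacking of that immediacy.
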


The tuples of left and right creation operators are closely related to each other. In order to see this, we define
\[
e_{\alpha} =
\begin{cases} e_{i_1} \otimes \cdots \otimes e_{i_m} & \mbox{if}~ \alpha = g_{i_1}\cdots g_{i_m}
\\
\quad 1 & \mbox{if}~ \alpha = e, \end{cases}
\]
for all $\alpha \in F_n^+$. Next we define the \textit{flip operator} $U_t: F^2_n \rightarrow F^2_n$ by
\[
\quad U_t(e_\alpha) := e_{\alpha^t},
\]
where
\[
\alpha^t := g_{i_m}\cdots g_{i_1},
\]
and $\alpha = g_{i_1}\cdots g_{i_m} \in F^+_n$. Clearly, $U_t$
is unitary, $U_t^2 = I_{F^2_n}$ and
\[
U_t(f \otimes g) = U_t f \otimes U_t g,
\]
for all $f, g \in F^2_n$. Moreover, for $\alpha \in F^+_n$, since $R^\alpha f = f\otimes e_{\alpha^t}$, $f \in F^2_n$, it follows that
\[
R^{\alpha} = U_t S^{\alpha} U_t.
\]
Recall that the \textit{noncommutative
analytic Toeplitz algebra} $F^{\infty}_n$ (respectively
$R^{\infty}_{n}$) is the weakly closed algebra generated by the left
(respectively right) creation operators and the identity operator
$\{S_i: i=1, \ldots, n\} \cup \{I_{F^2_n}\}$ (respectively $\{R_i:
i=1, \ldots, n\} \cup \{I_{F^2_n}\}$).

Let $\C\langle Z_1,\ldots,Z_n\rangle$ denote the unital and
associative free algebra generated by $n$ noncommutative
variables $Z_1, \ldots, Z_n$ over $\C$. Then
\[
\C \langle Z_1,\ldots,Z_n\rangle = \bigoplus_{\alpha\in F^+_n} \C Z^{\alpha},
\]
where $Z^{\alpha}= Z_{i_1} \cdots Z_{i_m}$ for each word $\alpha =
g_{i_1}\ldots g_{i_m} \in F^+_n$. Now let $\{X_1, . . . , X_n\}$ be
(not necessarily commuting) bounded linear operators on a Hilbert
space $\clh$. We realize $\clh$ as a \textit{$\C\langle
Z_1,\ldots,Z_n\rangle$-Hilbert module} as follows:
\[
\C\langle Z_1,\ldots,Z_n\rangle \times \clh \rightarrow \clh,
\]
with
\[
(p(Z_1,\ldots, Z_n), f) \mapsto p(Z_1,\ldots, Z_n) \cdot f := p(X_1,\ldots, X_n)f,
\]
for $p(Z_1,\ldots, Z_n)$ in $\C\langle Z_1,\ldots,Z_n\rangle$ and $f \in \clh$. We say that $\mathcal H$ is a \textit{(left) Hilbert module}
corresponding to $X=(X_1,\ldots,X_n) \in \clb(\mathcal H)^n$. Often
we will simply say that $\clh$ is a $\C\langle
Z_1,\ldots,Z_n\rangle$-Hilbert module (or simply a Hilbert module
when no confusion can result) if $X$ is clear from the context.
Given $\C\langle Z_1,\ldots,Z_n\rangle$-Hilbert modules $\clh$ and
$\clk$ corresponding to $X \in \clb(\H)^n$ and $Y \in \clb(\clk)^n$,
respectively, a bounded linear operator $A \in \clb(\clh, \clk)$ is
said to be a module map if
\[
A X_i = Y_i A \quad \quad (i=1, \ldots, n).
\]
We say that $\clh$ is a \textit{row-contractive Hilbert Module} if
$(X_1, \ldots, X_n) \in \mf B^{(n)}(\mathcal H)$. In addition, if
$Q^{\infty}_X=0$, then we say that the Hilbert module $\clh$ is {\it
pure}.

Let $\clh$ be a row-contractive $\C \langle Z_1, \ldots, Z_n
\rangle$-Hilbert module, and let $\clm$ be a closed subspace of $\clh$. We say that $\clm$ is
a \textit{submodule} of $\clh$ if $X_i \clm \subseteq \clm$ for all
$i=1, \ldots, n$. In this case, we also treat $\clm$ as a $\C\langle Z_1,\ldots,Z_n\rangle$-Hilbert module corresponding to the $n$-tuple
\[
X|_{\clm} = (X_1|_{\clm}, \ldots, X_n|_{\clm}).
\]
We record for clarity and future use that the pure property of Hilbert modules carry over to submodules (see the first part of the proof of \cite[Theorem 3.2]{S-is}):

\begin{lemma}\label{pure-contractive-sub-module}
Any submodule of a pure and row-contractive $\C \langle Z_1,
\ldots, Z_n \rangle$-Hilbert module is pure and row-contractive $\C \langle Z_1,
\ldots, Z_n \rangle$-Hilbert module.
\end{lemma}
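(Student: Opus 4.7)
The plan is to write $T_i := X_i|_{\clm}$ for $i=1,\ldots,n$, note that since $\clm$ is invariant under each $X_i$ we have $T_i \in \clb(\clm)$ with adjoint $T_i^* = P_{\clm} X_i^*|_{\clm}$, where $P_{\clm} : \clh \to \clh$ is the orthogonal projection onto $\clm$, and then verify separately that $T = (T_1, \ldots, T_n)$ is a row contraction and that $Q_T^\infty = 0$.

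For row-contractivity, I would use that $0 \leq P_{\clm} \leq I_{\clh}$ implies $X_i P_{\clm} X_i^* \leq X_i X_i^*$ for each $i$. Summing over $i$, for any $f \in \clm$,
\[
\Big\langle \sum_{i=1}^n T_i T_i^* f, f \Big\rangle = \sum_{i=1}^n \|P_{\clm} X_i^* f\|^2 \leq \sum_{i=1}^n \|X_i^* f\|^2 \leq \|f\|^2,
\]
using row-contractivity of $X$ in the last step. Thus $T \in \mathfrak{B}^{(n)}(\clm)$.

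For purity, the key observation is that $\clm$ being invariant under each $X_i$ forces $T^\alpha f = X^\alpha f$ for every $f \in \clm$ and every $\alpha \in F_n^+$ (an easy induction on $|\alpha|$, since each intermediate vector $X_{i_j}\cdots X_{i_m} f$ lies in $\clm$). Consequently $(T^\alpha)^* = P_{\clm} (X^\alpha)^*|_{\clm}$, which yields
\[
\|T^{*\alpha} f\|^2 = \|P_{\clm} X^{*\alpha} f\|^2 \leq \|X^{*\alpha} f\|^2
\]
for all $f \in \clm$. Summing over all words of length $m$,
\[
\Big\langle Q_T^m(I_{\clm}) f, f \Big\rangle = \sum_{|\alpha|=m} \|T^{*\alpha} f\|^2 \leq \sum_{|\alpha|=m} \|X^{*\alpha} f\|^2 = \langle Q_X^m(I_{\clh}) f, f \rangle,
\]
and by purity of $\clh$ the right-hand side tends to $0$ as $m \to \infty$. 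By Lemma \ref{pure-criterion}, $Q_T^\infty = 0$, and so $\clm$ is a pure row-contractive Hilbert module.

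There is no genuine obstacle here; the only mildly delicate point is the bookkeeping of word-algebra adjoints. The crucial identity is $(T^\alpha)^* = P_{\clm}(X^\alpha)^*|_{\clm}$, which reduces the problem to a straightforward projection-squeeze argument on $\clh$ and then lets us invoke purity of $\clh$ directly.
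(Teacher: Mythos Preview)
Your proof is correct and follows essentially the same approach as the paper: both establish row-contractivity via a direct estimate (the paper uses the column form $\|\sum X_i h_i\|^2 \le \sum \|h_i\|^2$ while you use the equivalent row form $\sum T_iT_i^* \le I$), and both obtain purity by the projection-squeeze $T^{\alpha}T^{*\alpha} = (X^{\alpha}P_{\clm}X^{*\alpha})|_{\clm} \le (X^{\alpha}X^{*\alpha})|_{\clm}$ and then invoke $Q_X^m(I_{\clh}) \to 0$. Your write-up is in fact slightly more explicit about the squeeze inequality than the paper's.
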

\begin{proof}
Let $\clh$ be a pure and row-contractive $\C \langle Z_1,
\ldots, Z_n \rangle$-Hilbert module, and let $\clm$ be a submodule of $\mathcal H$. For $h_1, \ldots, h_n \in \clm$, we have
\[
\|\sum_{i=1}^n X_i|_{\clm} h_i\|^2 = \|\sum_{i=1}^n X_i h_i\|^2 \leq \sum_{i=1}^n \|h_i\|^2,
\]
and hence $I_{\mathcal M}- Q_{X|_{\mathcal M}}(I_{\mathcal M}) \geq 0$ or, equivalently $X|_{\mathcal M} \in \mathfrak{B}^{(n)}(\mathcal M)$. Now, it can be checked easily, using
\[
X_i|_{\clm} (X_j|_{\clm})^* = (X_i P_{\clm} X_j^*)|_{\clm},
\]
for all $i,j = 1, \ldots, n$, that
\[
Q^m_{X|_{\mathcal M}}(I_{\mathcal M}) = \Big(\displaystyle
\sum_{\overset{|\alpha| = m}{\alpha \in F^+_{n}}}X^{\alpha} P_{\clm}
X^{*\alpha}\Big)|_{\clm},
\]
for all $m \geq 0$, and hence $Q^\infty_{X|_{\mathcal M}} = 0$.
\end{proof}

The quintessential example of pure and row-contractive Hilbert
modules over the noncommutative algebra $\C\langle
Z_1,\ldots,Z_n\rangle$ is the Fock module $F^2_n$ corresponding to
$(S_1, \ldots, S_n)$. If $n=1$, then the full Fock module $F^2_1$
can be identified with the {\it Hardy} module $H^2(\D)$ over the
unit disk and both $F^{\infty}_1, R^{\infty}_{1}$ coincide with
$H^{\infty}(\D)$. Also note that the $\C\langle
Z_1,\ldots,Z_n\rangle$-Hilbert module corresponding to the right
creation operators $(R_1, \ldots, R_n)$ is isometrically isomorphic,
via the flip operator, to the Fock module.

Given a Hilbert space $\cle$, the $\cle$-valued Fock module is the
$\C\langle Z_1,\ldots,Z_n\rangle$-Hilbert module $F^2_n \otimes
\mathcal{E}$ corresponding to the tuple
\[
S \otimes I_{\cle} = (S_1 \otimes I_{\cle}, \ldots, S_n \otimes   I_{\cle}) \in {\mathfrak B}^{(n)}(F^2_n \otimes \cle).
\]
When the Hilbert space $\cle$ is clear from the context, we also
write $S$ instead of $S \otimes I_{\cle}$. It is well known that

\[
\mathbb{C} \otimes \cle = \bigcap_{i=1}^{n} \ker (S_i \otimes I_{\cle})^*,
\]
and
\[
I_{F^2_n \otimes \cle}-\sum_{i=1}^{n} (S_i \otimes I_{\cle}) (S_i \otimes I_{\cle})^* = P_{\mathbb{C}} \otimes I_{\cle},
\]
where $P_{\mathbb{C}}$ denotes the orthogonal projection of $F^2_n$
onto the vacuum space $\mathbb{C} \subseteq F^2_n$. We say that a
bounded linear operator $\Theta: F^2_n \otimes \cle \rightarrow
F^2_n \otimes \cle_*$ is \textit{multi-analytic} if $\Theta$ is a
module map, that is
\[
\Theta (S_i \otimes I_{\cle}) = (S_i \otimes I_{\cle_*}) \Theta,
\]
for all $i=1, \ldots, n$. In this case, there exist unique
\textit{Fourier coefficients} $\theta_\alpha \in \clb(\cle, \cle_*)$,
$\alpha \in F_n^+$ \cite{Po95}, such that
\[
\Theta (R_1, \ldots, R_n) = \sum_{\alpha \in F^{+}_n} R^{\alpha} \otimes {\theta_{\alpha}}.
\]
Here the functional calculus of $\Theta (R_1, \ldots, R_n)$ is given by
\begin{equation}\label{multi-analytic}
\Theta (R_1, \ldots, R_n) = \mbox{{SOT-}}\lim_{r \rightarrow 1^{-}}
\sum_{m=0}^{\infty} \sum_{|\alpha| = m}
r^{|\alpha|}R^{\alpha}\otimes{\theta_{\alpha}}.
\end{equation}
Moreover, $\theta_\alpha \in \clb(\cle, \cle_*)$, $\alpha \in F_n^+$,
is uniquely determined by $\Theta$ as follows:
\begin{equation}\label{fourier-coeff}
\inp{\theta_{\alpha^t} \eta}{\zeta} = \inp{\Theta (1 \otimes
\eta)}{e_{\alpha}\otimes \zeta},
\end{equation}
for all $\eta \in \mathcal{E}$ and $\zeta \in \cle_*$. The above equality also shows that the vector $\Theta (1 \otimes \eta)$, $\eta \in \cle$, determines (and determined by) $\Theta$ uniquely. Hence we define $\theta : \cle \rightarrow F^2_n\otimes \cle_*$ by
\[
\theta \eta:= \Theta (1 \otimes \eta)  \quad \quad (\eta \in \mathcal E).
\]
It is well known (see for example \cite{ArPo95, DaPi99}) that the
set of all module maps from $F^2_n\otimes \mathcal E$ to
$F^2_{n}\otimes \mathcal E_*$ coincides with the weakly closed
subspace $R^{\infty}_n \overline \otimes \clb(\cle, \cle_*)$ generated
by the spatial tensor product $R^{\infty}_n \otimes_{sp} \clb(\cle,
\cle_*)$.

\textsf{Elements of $R^{\infty}_n \overline \otimes \clb(\cle, \cle_*)$
will be denoted by $\Theta(R_1, \ldots, R_n)$, or simply by $\Theta$
if $(R_1, \ldots, R_n)$ is clear from the context.}

A bounded linear operator $M \in \clb(F^2_n \otimes \cle, F^2_n \otimes \cle_*)$ is said to be \textit{multi-coanalytic} if
\[
M(S_i^* \otimes I_{\cle}) = (S_i^* \otimes I_{\cle_*})M,
\]
for all $i=1, \ldots, n$. If $\Theta \in \mathcal B{(F^2_n \otimes
\mathcal E, F^2_n \otimes \mathcal E_*)}$ is both multi-analytic and
multi-coanalytic, then by \eqref{fourier-coeff}, we have that
\[
\inp{\theta_{\alpha^t} \eta}{\zeta}  = \inp{(S^{\alpha*}\otimes
I_{\cle}) (1 \otimes \eta)}{\Theta^* (1\otimes \zeta)}=0,
\]
for all $\alpha \in F_n^+ \setminus \{e\}$, $\eta \in \cle$ and
$\zeta \in \cle_*$. On the other hand, if $\Theta \in \mathcal
B{(F^2_n \otimes \cle, F^2_n \otimes \cle_*)}$ and $\theta_\alpha =
0$ for all $F_n^+ \setminus \{e\}$, then one can easily check that
$\Theta$ is multi-coanalytic. Thus, we have proved the following
lemma:

\begin{lemma}\label{constant-fourier-coeff} A module map $\Theta \in \mathcal
B{(F^2_n \otimes \mathcal E, F^2_n \otimes \mathcal E_*)}$ is
multi-coanalytic if and only if the associated Fourier coefficients
$\theta_{\alpha} = 0$ for all $\alpha \in F_n^+ \setminus \{e\}$.
\end{lemma}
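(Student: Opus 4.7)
The plan is to establish the two implications separately, in both cases leveraging the explicit Fourier coefficient formula \eqref{fourier-coeff} together with the series representation \eqref{multi-analytic}.

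For the forward direction, assume $\Theta$ is multi-coanalytic in addition to being multi-analytic. Fix $\alpha \in F_n^+ \setminus \{e\}$, $\eta \in \cle$, and $\zeta \in \cle_*$. Starting from
\[
\inp{\theta_{\alpha^t}\eta}{\zeta} = \inp{\Theta(1 \otimes \eta)}{e_\alpha \otimes \zeta},
\]
I would write $e_\alpha \otimes \zeta = (S^\alpha \otimes I_{\cle_*})(1 \otimes \zeta)$, pass $S^{\alpha*} \otimes I_{\cle_*}$ onto the left factor by adjointness, and then iterate the multi-coanalytic intertwining $(S_i^* \otimes I_{\cle_*})\Theta = \Theta(S_i^* \otimes I_{\cle})$ to move this past $\Theta$. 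The resulting identity
\[
\inp{\theta_{\alpha^t}\eta}{\zeta} = \inp{\Theta(S^{\alpha*} \otimes I_{\cle})(1 \otimes \eta)}{1 \otimes \zeta} = 0
\]
holds because $S^{\alpha*} 1 = 0$ whenever $|\alpha| \geq 1$. Since $\alpha \mapsto \alpha^t$ is an involution on $F_n^+$ fixing only $e$, the vanishing of $\theta_{\alpha^t}$ for all $\alpha \neq e$ is equivalent to $\theta_\beta = 0$ for all $\beta \in F_n^+ \setminus \{e\}$.

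For the converse, suppose $\theta_\alpha = 0$ for every $\alpha \in F_n^+ \setminus \{e\}$. Then \eqref{multi-analytic} collapses to $\Theta = I_{F^2_n} \otimes \theta_e$, and the direct computation
\[
(S_i^* \otimes I_{\cle_*})(I_{F^2_n} \otimes \theta_e) = S_i^* \otimes \theta_e = (I_{F^2_n} \otimes \theta_e)(S_i^* \otimes I_{\cle})
\]
shows that $\Theta$ is multi-coanalytic. No step is expected to present a real obstacle; the only content-bearing maneuver is the adjoint/intertwining swap in the forward direction, which is immediate from the definitions combined with \eqref{fourier-coeff}, while the converse reduces to bookkeeping once the functional calculus is in hand.
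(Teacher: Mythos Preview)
Your proof is correct and follows essentially the same route as the paper: the forward direction uses \eqref{fourier-coeff} together with the multi-coanalytic intertwining to reduce to $S^{\alpha*}1=0$, and the converse observes that the only surviving term in \eqref{multi-analytic} is $I_{F^2_n}\otimes\theta_e$, which is trivially multi-coanalytic. The paper merely asserts the converse as an easy check, so your version is slightly more explicit but otherwise identical.
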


This also proves the following (well known) observation: A closed
subspace $\mathcal M \subseteq F^2_n\otimes \mathcal E$ is joint
reducing for $S \otimes I_{\cle}$ if and  only if there exists a
closed subspace $\clk \subseteq \cle$ such that $\mathcal M  = F^2_n
\otimes \mathcal K$. To prove the non-trivial implication, let
$\mathcal M$ is reducing for $S \otimes I_{\cle}$. Then the
orthogonal projection $P_{\mathcal M}$ onto $\mathcal M$ is a module
map. Since $P_{\mathcal M}$ is self-adjoint it is also a
multi-coanlaytic operator, and hence, by the above lemma, $P_{\clm}$
must be constant. Finally, since $P_{\mathcal M}$ is positive and
idempotent, it follows that
\[
P_{\clm} = I_{F^2_n} \otimes P_{\clk},
\]
for some $\clk \subseteq \cle$.

\section{Beurling, Lax and Halmos theorem and commutators}\label{section-commutators}

The classical Beurling, Lax and Halmos theorem \cite[page 198,
Theorem 3.3]{NF} deals with a complete classification of invariant
subspaces of vector-valued Hardy spaces over the open unit disc
$\D$. To be more specific, let $\cle_*$ be a Hilbert space, and let
$\cls$ be a closed subspace of $H^2_{\cle_*}(\D)$. Then the
Beurling, Lax and Halmos theorem says that $\cls$ is $M_z$-invariant
if and only if there exist a Hilbert space $\cle$ and an inner
function $\Theta \in H^\infty_{\clb(\cle, \cle_*)}(\D)$ such that
\[
\cls = \Theta H^2_{\cle}(\D).
\]
In particular, the shift $M_z$ on the Hardy space $H^2_{\cle}(\D)$ and the restriction operator $M_{z}|_{\S}$ on $\S$ are unitarily equivalent.
This has been generalized for Fock module by Popescu \cite{Po89-2}.
Here, however, we give a slightly direct (or geometric) approach to
reprove Popescu's result. This will be useful in characterizing
submodules of Fock $\n$-modules. Along the way we will also
parameterize commutators of tuples of noncommutative pure isometries.

Let $V = ( V_1,\ldots,V_n)\in \mathfrak B^n(\mathcal H)$ be a tuple
of isometries with orthogonal ranges. We call such a tuple a \textit{pure isometric tuple} if
\[
\mathcal Q_V^{\infty} =0.
\]
The following lemma will be also useful in analyzing commutators of pure isometric tuples.

\begin{lemma}\label{pure-smodules}
Let $\H$ be a $\mathbb{C}\langle Z_1, \ldots, Z_n \rangle$-Hilbert module corresponding to a pure isometric tuple $V=(V_1,\ldots,V_n)$.
Let
\[
\cle:= \H \ominus \displaystyle \sum_{i=1}^n V_{i} \H = \mathop {\bigcap}_{i=1}^n \ker V_i^*.
\]
Then
\[
\mbox{SOT}-\sum_{\alpha \in \F^+_n} V^\alpha P_{\mathcal{E}} V^{\alpha *} = I_{\mathcal H},
\]
and
\[
\H = \bigoplus_{\alpha \in F^+_n} V^{\alpha} \cle.
\]
Moreover, the map $L_V : \H \rightarrow F^2_n \otimes \cle$ defined by
\begin{equation}\label{eq-LV}
L_V ( V^{\alpha} \eta) := e_{\alpha} \otimes \eta = S^{\alpha} (1\otimes \eta) \quad \quad (\alpha \in F^+_n, \eta \in \cle),
\end{equation}
is a unitary module map and
\beqn
L_{V} f = \sum_{\alpha \in F^+_n} e_{\alpha}\otimes (P_{\mathcal E} V^{*\alpha} f) \quad  \quad (f\in \mathcal H).
\eeqn
\end{lemma}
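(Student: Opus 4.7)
The plan is to work through the statements in order, since each uses the previous one, and the whole thing reduces to a telescoping identity plus orthogonality bookkeeping.

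First, I would verify the equality $\mathcal{H} \ominus \sum_i V_i\mathcal{H} = \bigcap_i \ker V_i^*$. Since the $V_i$ are isometries with orthogonal ranges, $V_i^* V_j = \delta_{ij}I$, so the sum $\sum_i V_i\mathcal{H}$ is in fact an orthogonal direct sum $\bigoplus_i V_i \mathcal{H}$, and its orthogonal complement is exactly $\bigcap_i \ker V_i^*$. Equivalently,
\[
P_{\mathcal{E}} = I_{\mathcal{H}} - \sum_{i=1}^n V_i V_i^* = I_{\mathcal{H}} - Q_V(I_{\mathcal{H}}).
\]

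Next, the SOT-identity follows from a telescoping argument. Applying $Q_V^m$ to the previous identity gives
\[
Q_V^m(I_{\mathcal{H}}) - Q_V^{m+1}(I_{\mathcal{H}}) = Q_V^m(P_{\mathcal{E}}) = \sum_{|\alpha|=m,\,\alpha \in F_n^+} V^\alpha P_{\mathcal{E}} V^{*\alpha},
\]
and summing from $m=0$ to $N$ yields $I_{\mathcal{H}} - Q_V^{N+1}(I_{\mathcal{H}})$. Because $V$ is pure, Lemma~\ref{pure-criterion} gives $Q_V^\infty = 0$, so letting $N \to \infty$ in SOT produces the stated identity $\mathrm{SOT}\text{-}\sum_{\alpha \in F_n^+} V^\alpha P_{\mathcal{E}} V^{*\alpha} = I_{\mathcal{H}}$.

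Then I would show that the subspaces $\{V^\alpha \mathcal{E}\}_{\alpha \in F_n^+}$ are mutually orthogonal, from which the decomposition $\mathcal{H} = \bigoplus_{\alpha \in F_n^+} V^\alpha \mathcal{E}$ follows (together with the preceding step, which says the orthogonal sum exhausts $\mathcal{H}$). The orthogonality splits into cases: for $|\alpha|=|\beta|$ with $\alpha \neq \beta$, one has $V^{*\beta} V^\alpha = 0$; if $\beta = \alpha\gamma$ with $|\gamma|\geq 1$, then $V^\beta \mathcal{E} \subseteq V^\alpha V_{i_1}\mathcal{H}$, which is orthogonal to $V^\alpha \mathcal{E}$ since $\mathcal{E} \perp V_{i_1}\mathcal{H}$; and the remaining cases reduce to these.

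Finally, I would define $L_V$ on the dense subspace $\bigoplus_\alpha V^\alpha\mathcal{E}$ by the given formula and extend. It is isometric on this dense subspace since
\[
\langle V^\alpha \eta, V^\beta \zeta\rangle = \delta_{\alpha,\beta}\langle \eta,\zeta\rangle = \langle e_\alpha\otimes \eta, e_\beta \otimes \zeta\rangle,
\]
and surjective since its range contains the orthonormal total set $\{e_\alpha\otimes \eta\}$. The module map property $L_V V_i = S_i L_V$ is immediate on simple tensors since $V_i(V^\alpha \eta) = V^{g_i\alpha}\eta \mapsto e_{g_i\alpha}\otimes \eta = S_i(e_\alpha\otimes \eta)$. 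For the explicit formula, decompose $f = \sum_\alpha V^\alpha \eta_\alpha$ with $\eta_\alpha \in \mathcal{E}$; the key observation is that for $\eta_\beta \in \mathcal{E}$ with $\beta \neq \alpha$, the vector $V^{*\alpha} V^\beta \eta_\beta$ is either zero (when neither word is a prefix of the other, or when $\beta$ is a strict prefix of $\alpha$, using $\mathcal{E} \subseteq \ker V_i^*$) or lies in $V_{i_1}\mathcal{H} \perp \mathcal{E}$ (when $\alpha$ is a strict prefix of $\beta$). Thus $P_{\mathcal{E}} V^{*\alpha} f = \eta_\alpha$, yielding $L_V f = \sum_\alpha e_\alpha \otimes (P_{\mathcal{E}} V^{*\alpha} f)$.

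The only step requiring real care is the orthogonality bookkeeping in the third step; everything else is a direct computation once the telescoping identity is in hand. I do not expect any serious obstacle.
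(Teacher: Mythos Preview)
Your proof is correct and follows essentially the same approach as the paper: the telescoping identity for $P_{\mathcal{E}}$ combined with purity gives the SOT resolution of the identity, from which the orthogonal decomposition and the properties of $L_V$ follow. The only cosmetic difference is that the paper deduces $\mathcal{H} = \bigoplus_\alpha V^\alpha\mathcal{E}$ by showing that any $f$ orthogonal to every $V^\alpha\mathcal{E}$ must satisfy $P_{\mathcal{E}}V^{*\alpha}f = 0$ for all $\alpha$ and hence vanish by the SOT identity, rather than working through the orthogonality casework you spell out; both routes are equally short.
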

\begin{proof}
We first note that $P_{\mathcal{E}} =  I_{\mathcal H} -\displaystyle\sum_{i = 1}^{n} V_iV_i^*$. Hence
\[
V^{\alpha} P_{\mathcal{E}} V^{\alpha *} = V^{\alpha} (I_H -\sum_{i = 1}^{n}  V_iV_i^*)V^{\alpha *} = V^{\alpha} V^{\alpha*} - \sum_{i=1}^{n} V^{\alpha}V_{i} V_{i}^* V^{\alpha*},
\]
for all $\alpha \in F^+_{n}$. For each $k \geq 1$, we have
\beqn
\sum_{|\alpha| \leq k} V^{\alpha} V^{\alpha*} = \sum_{l = 0}^{k} \sum_{|\alpha| = l }(V^{\alpha} V^{\alpha*}
- \sum_{|\alpha| = l+1} V^{\alpha} V^{\alpha*})
= I_{\mathcal H} - \sum_{|\alpha| = k+1} V^{\alpha} V^{\alpha*},
\eeqn
and hence the first equality follows from Lemma \ref{pure-criterion}. It is now easy to prove the second equality: Observe that $\displaystyle\bigoplus_{\alpha \in F^+_n} V^{\alpha} \cle$ is a joint reducing subspace of $V$. If $f \in \clh$ and 
\[
f \perp V^{\alpha} \cle,
\]
for all $\alpha \in F^+_n$, then  $V^{\alpha *} f \perp \cle$ and hence
\[
P_{\mathcal E} V^{*\alpha} f = 0.
\]
The first equality then implies that $f = 0$, which proves the validity of the second equality. The fact that $L_V$, as defined in \eqref{eq-LV}, is a unitary module map follows readily from the grading of $\clh$ in the second equality. The final equality follows from the first and the definition of $L_V$ in \eqref{eq-LV}.
\end{proof}

Given a pure isometric tuple $V = ( V_1,\ldots,V_n)\in \mathfrak B^n(\mathcal H)$, the unitary module map $L_V$ constructed in the above lemma is called the \textit{canonical module unitary operator} corresponding to $V$.

The following general fact will be useful: Given a closed subspace
$\cls$ of a Hilbert space $\clh$, the inclusion map $\iota_{\mathcal
S}: \cls \hookrightarrow \clh$ satisfies the following properties:
\[
\iota^*_{\cls} \iota_{\cls}  = I_{\cls}  \quad \mbox{ and }\quad
\iota_{\cls} \iota^*_{\cls} = P_{\cls}.
\]

Now, let $\cle_*$ be a Hilbert space, and let $\mathcal M \subseteq
F^2_n \otimes \cle_*$ be a submodule of $F^2_n \otimes \cle_*$.
Applying Lemma \ref{pure-smodules}, by virtue of Lemma
\ref{pure-contractive-sub-module}, to
\[
(S \otimes I_{\cle_*})|_{\clm} = ((S_i \otimes I_{\cle_*})|_{\clm},
\ldots, (S_n \otimes I_{\cle_*})|_{\clm}),
\]
we obtain the canonical module unitary operator corresponding to $(S \otimes I_{\cle_*})|_{\clm}$ as $L_{S|_{\clm}} : \clm \rightarrow F^2_n \otimes \cle$, where
\begin{equation}\label{eq-ker SI}
\cle = \mathop{\bigcap}_{i=1}^n \ker \Big((S_i \otimes I_{\cle_*})|_{\clm} \Big)^* \subseteq \clm.
\end{equation}
Then $\iota_{\mathcal M} L_{(S \otimes I_{\cle_*})|_{\clm}}^*: F^2_n \otimes \cle \raro F^2_n \otimes \cle_*$ is an isometric module map and
\[
\clm = \mbox{ran} (\iota_{\mathcal M} L_{(S \otimes I_{\cle_*})|_{\clm}}^*).
\]
If we define
\[
\Theta  = \iota_{\clm} L^*_{(S \otimes I_{\cle_*})|_{\clm}},
\]
then $\Theta \in R^{\infty}_n \overline{\otimes} \clb(\cle, \cle_*)$ is an \textit{inner} (that is, isometric) multi-analytic operator, and moreover
\[
\clm = \Theta  (F^2_n \otimes \mathcal E).
\]
We now proceed to the uniqueness part. Let $\tilde{\cle}$ be a Hilbert space, and let $\tilde{\Theta}$ be an inner multi-analytic operator in $R^{\infty}_n \overline{\otimes} \clb(\tilde{\cle}, \cle_*)$ such that $\clm = \tilde{\Theta} (F^2_n \otimes \tilde{\cle})$. Then for
\[
\Theta^* \tilde{\Theta}: F^2_n \otimes \tilde{\cle} \raro F^2_n \otimes \cle,
\]
we have
\[
(S_i \otimes I_{{\cle}})\Theta^* \tilde{\Theta} = L_V (S_i \otimes I_{{\cle}})|_{\clm} \iota_{\clm}^* \tilde{\Theta} =  L_V (S_i \otimes I_{{\cle}}) \tilde{\Theta} = L_V \tilde{\Theta} (S_i \otimes I_{{\cle}}),
\]
as $\mbox{ran} \tilde{\Theta} = \clm$ and $\iota_{\clm}^* \tilde{\Theta} = \tilde{\Theta}$, and hence
\[
(S_i \otimes I_{{\cle}})\Theta^* \tilde{\Theta} = \Theta^* \tilde{\Theta} (S_i \otimes I_{{\cle}}),
\]
for all $i = 1, \ldots, n$. Therefore $\Theta^* \tilde{\Theta}$ is a module map. On the other hand, since
\[
\Theta(F^2_n \otimes \cle) = \tilde{\Theta}(F^2_n \otimes \tilde{\cle}),
\]
for $h \in F^2_n \otimes \cle$, there exists $\tilde{h} \in F^2_n \otimes \tilde{\cle}$ such that $\Theta h = \tilde{\Theta} \tilde{h}$. Then we have
\[
(S_i \otimes I_{\tilde{\cle}}) \tilde{\Theta}^* \Theta h = (S_i \otimes I_{\tilde{\cle}}) \tilde{h} = \tilde{\Theta}^* (S_i \otimes I_{\cle_*}) \tilde{\Theta} \tilde{h} = \tilde{\Theta}^* (S_i \otimes I_{\cle_*}) \Theta h = \tilde{\Theta}^* \Theta (S_i \otimes I_{{\cle}}) h,
\]
for all $i=1, \ldots, n$, and hence that $\Theta^* \tilde{\Theta}$ is multi-coanalytic. Lemma \ref{constant-fourier-coeff} then implies that $\Theta^* \tilde{\Theta}$ is a constant map, that is, $\Theta^* \tilde{\Theta} = I_{F^2_n} \otimes \tau$ for some $\tau \in \clb(\tilde{\cle}, \cle)$. Thus
\[
\tilde{\Theta} = {\Theta} (I_{F^2_n} \otimes \tau),
\]
as $\Theta \Theta^* = P_{\mbox{ran} \tilde{\Theta}}$. That $\tau$ is a unitary follows from the fact that both $\Theta$ and $\tilde{\Theta}$ are isometries and $\Theta(F^2_n \otimes \cle) = \tilde{\Theta}(F^2_n \otimes \tilde{\cle})$.

Thus we have proved Popescu's noncommutative version of the classical Beurling-Lax-Halmos theorem (see \cite{ArPo95}, \cite{Po89-2} and \cite{DaPi99} for the original versions). 

\begin{theorem}\label{BLHT}
Let $\cle_*$ be a Hilbert space and let $\clm$ be a closed subspace of $F^2_n \otimes \cle_*$. Suppose
\[
\cle = \clm \ominus \displaystyle \sum_{i=1}^n (S_i \otimes I_{\cle_*}) \clm.
\]
Then the following are equivalent:

(i) $\clm$ is a submodule of $F^2_n \otimes \cle_*$.

(ii) There exist an inner multi-analytic operator $\Theta: F^2_n \otimes \cle \raro F^2_n \otimes \cle_*$ such that $\clm = \Theta(F^2_n \otimes \cle)$.

\noindent Moreover, in the latter case, if $\clm = \tilde{\Theta}(F^2_n \otimes \tilde{\cle})$ for some Hilbert space $\tilde{\cle}$ and inner multi-analytic operator $\tilde{\Theta}: F^2_n \otimes \tilde{\cle} \raro F^2_n \otimes \cle_*$, then there exists a unitary $\tau \in \clb(\tilde{\cle}, \cle)$ such that
\[
\tilde{\Theta} = {\Theta} (I_{F^2_n} \otimes \tau).
\]
\end{theorem}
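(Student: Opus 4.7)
The proof is genuinely two implications plus a uniqueness statement, and the strategy is to reduce everything to the machinery already built in Section~\ref{section-commutators}, namely the canonical module unitary of Lemma~\ref{pure-smodules} and the constancy criterion of Lemma~\ref{constant-fourier-coeff}.

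The easy direction is (ii)$\Rightarrow$(i): any multi-analytic $\Theta$ is by definition a module map, so its range $\Theta(F^2_n\otimes\cle)$ is automatically invariant under each $S_i\otimes I_{\cle_*}$. For (i)$\Rightarrow$(ii), the plan is as follows. The tuple $S\otimes I_{\cle_*}$ is pure and row-contractive (indeed, a pure isometric tuple with orthogonal ranges), so by Lemma~\ref{pure-contractive-sub-module} the restriction $(S\otimes I_{\cle_*})|_{\clm}$ is also pure and row-contractive on $\clm$; since compressions of isometries with orthogonal ranges to an invariant subspace remain isometries with orthogonal ranges, the restriction is in fact a pure isometric tuple. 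Lemma~\ref{pure-smodules} then yields the canonical module unitary $L_{(S\otimes I_{\cle_*})|_{\clm}}:\clm\to F^2_n\otimes\cle$, where $\cle$ is the wandering subspace of the restriction, which coincides with $\clm\ominus\sum_{i=1}^n(S_i\otimes I_{\cle_*})\clm$ as in the theorem. Setting
\[
\Theta := \iota_{\clm}\,L^*_{(S\otimes I_{\cle_*})|_{\clm}}:F^2_n\otimes\cle\to F^2_n\otimes\cle_*,
\]
the operator $\Theta$ is an isometry (composition of the unitary $L^*$ with the isometric inclusion $\iota_{\clm}$), has range equal to $\clm$, and intertwines $S_i\otimes I_{\cle}$ with $S_i\otimes I_{\cle_*}$ because $L^*$ is a module map onto $\clm$ and $\iota_{\clm}$ is a module map into $F^2_n\otimes\cle_*$. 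Hence $\Theta\in R^{\infty}_n\overline{\otimes}\clb(\cle,\cle_*)$ is an inner multi-analytic operator with $\clm=\Theta(F^2_n\otimes\cle)$.

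For uniqueness, suppose $\tilde{\Theta}\in R^{\infty}_n\overline{\otimes}\clb(\tilde{\cle},\cle_*)$ is another inner multi-analytic operator with $\tilde{\Theta}(F^2_n\otimes\tilde{\cle})=\clm$. The key object is $\Theta^*\tilde{\Theta}:F^2_n\otimes\tilde{\cle}\to F^2_n\otimes\cle$. The plan is to verify that this operator is both multi-analytic and multi-coanalytic; the first property follows from $\Theta^*(S_i\otimes I_{\cle_*})=(S_i\otimes I_{\cle})\Theta^*$ on $\clm=\operatorname{ran}\tilde{\Theta}$ together with $\tilde{\Theta}$ being a module map, while the second uses $\Theta\Theta^*=P_{\clm}$ and the fact that for any $h\in F^2_n\otimes\tilde{\cle}$ one has $\tilde{\Theta}h\in\clm$, so that $\Theta^*\tilde{\Theta}h$ can be written (via $\Theta\Theta^*\tilde{\Theta}=\tilde{\Theta}$) in a way that lets $S_i^*$ pass through. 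Once both intertwining properties are in hand, Lemma~\ref{constant-fourier-coeff} forces $\Theta^*\tilde{\Theta}=I_{F^2_n}\otimes\tau$ for some $\tau\in\clb(\tilde{\cle},\cle)$, and then $\tilde{\Theta}=\Theta\Theta^*\tilde{\Theta}=\Theta(I_{F^2_n}\otimes\tau)$. Unitarity of $\tau$ follows from the isometry of both $\Theta,\tilde{\Theta}$ and the equality of ranges.

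The main technical obstacle, and the place where one must be careful, is verifying that $\Theta^*\tilde{\Theta}$ is multi-coanalytic: the argument is not symmetric with the multi-analytic check because $\Theta^*$ is not a module map in the usual sense but only intertwines the adjoints after compressing to $\clm$. The clean way to handle this is to systematically exploit $\iota_{\clm}^*\tilde{\Theta}=\tilde{\Theta}$ (since $\operatorname{ran}\tilde{\Theta}\subseteq\clm$) and $\Theta\Theta^*=P_{\clm}$, reducing every computation to operations entirely inside $\clm$, where both $S_i\otimes I_{\cle_*}$ and its adjoint behave well relative to the wandering-subspace decomposition supplied by Lemma~\ref{pure-smodules}.
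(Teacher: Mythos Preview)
Your proposal is correct and follows essentially the same route as the paper: construct $\Theta = \iota_{\clm}\, L^*_{(S\otimes I_{\cle_*})|_{\clm}}$ via Lemmas~\ref{pure-contractive-sub-module} and~\ref{pure-smodules}, then prove uniqueness by showing $\Theta^*\tilde{\Theta}$ is both multi-analytic and multi-coanalytic and invoking Lemma~\ref{constant-fourier-coeff}. For the multi-coanalytic step the paper's concrete device is to show instead that $\tilde{\Theta}^*\Theta$ is multi-analytic by writing each $\Theta h$ as $\tilde{\Theta}\tilde{h}$ (using equality of ranges and injectivity of $\tilde{\Theta}$), which is precisely the ``reduction to operations inside $\clm$'' you anticipate.
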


It is worthwhile to note that our approach and presentation is slightly different than that of Popescu. Moreover, the present approach will be useful in the study of submodules of Fock module and Fock $\n$-modules.

We now turn to the representations of commutators of pure isometric tuples.

\begin{theorem}\label{comm-fourier-coeff}
Let $\H$ be a $\mathbb{C}\langle Z_1, \ldots, Z_n \rangle$-Hilbert module corresponding to a pure isometric tuple $V=(V_1,\ldots,V_n)$. Let $L_V: \clh \raro F^2_n \otimes \cle$ be the canonical module unitary operator. Then $C \in \{V_1,\ldots,V_n\}'$ if and only if there exists a multi-analytic operator $\Phi \in R^{\infty}_n \overline \otimes \clb(\cle)$ such that $L_V C L_V^* =\Phi$ and the Fourier coefficients of $\Phi$ are given by
\[
\vp_{\alpha^t} = P_{\cle} V^{\alpha*} C|_{\cle} \quad\quad (\alpha \in  F^{+}_{n}).
\]
\end{theorem}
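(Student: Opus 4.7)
The plan is to exploit the fact that $L_V$ is a module unitary, which translates the commutant problem on $\clh$ into a module-map problem on $F^2_n \otimes \cle$, where the classification of module maps as elements of $R^{\infty}_n \overline{\otimes} \clb(\cle)$ has already been recorded in Section \ref{section - Preliminaries}. The Fourier coefficient formula will then fall out by unwinding \eqref{fourier-coeff} through the explicit description of $L_V^*$.

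First I would record the intertwining property $L_V V_i = (S_i \otimes I_{\cle}) L_V$ for $i = 1, \ldots, n$, which is immediate from the definition of $L_V$ in \eqref{eq-LV} since $V_i(V^{\alpha} \eta) = V^{g_i \alpha} \eta$. For the forward direction, if $C \in \{V_1, \ldots, V_n\}'$, then the conjugate $\Phi := L_V C L_V^*$ satisfies
\[
\Phi (S_i \otimes I_{\cle}) = L_V C V_i L_V^* = L_V V_i C L_V^* = (S_i \otimes I_{\cle}) \Phi,
\]
so $\Phi$ is a module map on $F^2_n \otimes \cle$ and hence, by the cited identification of module maps with $R^{\infty}_n \overline{\otimes} \clb(\cle)$ (see \cite{ArPo95, DaPi99}), $\Phi$ is multi-analytic. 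The reverse direction is just the reverse conjugation: if $L_V C L_V^* = \Phi$ is multi-analytic, then the same chain of equalities reversed gives $C V_i = V_i C$ for all $i$.

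For the Fourier coefficient computation, note that from \eqref{eq-LV} we read off $L_V^*(1 \otimes \eta) = \eta$ and $L_V^*(e_{\alpha} \otimes \zeta) = V^{\alpha} \zeta$ for $\eta, \zeta \in \cle$ and $\alpha \in F_n^+$. Applying \eqref{fourier-coeff} to $\Phi = L_V C L_V^*$ gives, for all $\eta, \zeta \in \cle$,
\[
\inp{\vp_{\alpha^t} \eta}{\zeta} = \inp{\Phi(1 \otimes \eta)}{e_{\alpha} \otimes \zeta} = \inp{C L_V^*(1 \otimes \eta)}{L_V^*(e_{\alpha} \otimes \zeta)} = \inp{C \eta}{V^{\alpha} \zeta} = \inp{V^{\alpha *} C \eta}{\zeta},
\]
where the last inner product is in $\clh$. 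Since $\zeta \in \cle$, this equals $\inp{P_{\cle} V^{\alpha *} C|_{\cle} \eta}{\zeta}$, yielding $\vp_{\alpha^t} = P_{\cle} V^{\alpha *} C|_{\cle}$ as claimed.

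I do not anticipate a serious obstacle here: the structural content was already packaged in Lemma \ref{pure-smodules} (which produces $L_V$) and in the preliminaries (which identify module maps on $F^2_n \otimes \cle$ with $R^{\infty}_n \overline{\otimes} \clb(\cle)$). The only mild care needed is to keep the transpose $\alpha \mapsto \alpha^t$ straight when matching $\vp_{\alpha^t}$ against the basis vector $e_{\alpha}$, which is dictated by the convention chosen in \eqref{fourier-coeff}.
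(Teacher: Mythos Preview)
Your proof is correct and follows essentially the same approach as the paper: both conjugate $C$ by the module unitary $L_V$, use the intertwining $L_V V_i = (S_i\otimes I_{\cle})L_V$ to identify $L_V C L_V^*$ as a module map (hence multi-analytic), and then read off the Fourier coefficients via \eqref{fourier-coeff}. The only cosmetic difference is that the paper computes $L_V(C\eta)$ directly via the expansion $L_V f = \sum_{\alpha} e_\alpha \otimes (P_{\cle}V^{*\alpha}f)$ from Lemma \ref{pure-smodules}, whereas you equivalently use the adjoint relation $L_V^*(e_\alpha\otimes\zeta)=V^\alpha\zeta$.
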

\begin{proof}
Let $C \in \mathcal B(\mathcal H)$ and let $\eta \in \cle$. Then $C L_{V}^* (1 \otimes \eta) = C \eta$, as $L_V^*(1 \otimes \eta) = \eta$ by \eqref{eq-LV}. By the definition of $L_V$, we have
\[
L_{V}C L^*_{V} (1 \otimes \eta) = L_{V} C \eta = \sum_{\alpha\in F^+_n} e_{\alpha} \otimes (P_{\cle} V^{\alpha *} C \eta).
\]
Clearly, if $C\in \{V_1,\ldots,V_n\}'$, then
\[
(L_V C L^*_V) (S_i \otimes I_{\cle}) = (S_i \otimes I_{\cle}) (L_V C L^*_V),
\]
for all $i = 1, \ldots, n$, and hence $L_{V}C L^*_{V}$ is a multi-analytic operator. Let
\[
L_{V}C L^*_{V} = \Phi \in R^{\infty}_n \overline{\otimes} \clb(\cle).
\]
Then by \eqref{multi-analytic} we have
\beqn
\Phi(R_1, \ldots, R_n) = \mbox{SOT}-\lim_{r\rightarrow 1^-}\sum_{m=0}^{\infty} \sum_{|\alpha| = m} r^{|\alpha|} R^{\alpha}\otimes{\vp_{\alpha}},
\eeqn
where $\vp_{\alpha} \in \clb(\mathcal E)$ for all $\alpha \in F^+_n.$ Finally, if $\eta, \zeta \in \mathcal E$, then by \eqref{fourier-coeff} we have
\[
\begin{split}
\inp{\vp_{\alpha^t}\eta}{\zeta} & = \inp{ L_V C L_V^* (1\otimes \eta )}{e_{\alpha}\otimes \zeta}
\\
& = \inp{\sum_{\beta \in F^+_n} e_{\beta} \otimes (P_{\cle} V^{\beta *} C \eta)}{e_{\alpha}\otimes \zeta}
\\
& = \inp{P_{\cle} V^{\alpha *} C \eta}{\zeta},
\end{split}
\]
and hence $\vp_{\alpha^t} = P_{\cle} V^{\alpha *} C|_{\cle}$ for all $\alpha \in F^+_n$. The converse is obvious.
\end{proof}

Therefore, the unique formal Fourier expansion of $\Phi \in R^{\infty}_n \overline \otimes \clb(\cle)$ in the above proposition is given by
\[
\Phi(R_1, \ldots, R_n) = \sum_{\alpha \in F_n^+} R^{\alpha} \otimes \Big(P_{\cle} V^{\alpha^t *} C|_{\cle}\Big).
\]

Finally, a word of caution is necessary here: Since
\[
\cle = \bigcap_{i=1}^n \ker V_i^* \subseteq \clh,
\]
the representation of $\vp_{\alpha^t}$, $\alpha \in F_n^+$, in Theorem \ref{comm-fourier-coeff} is well-defined.

\section{Submodules of Fock $\n$-modules}\label{section-submodules}

This section deals with representations of submodules of the Fock $\n$-module $F^2_{\n}$. This problem originated from the natural question whether Beurling, Lax and Halmos type inner function based characterizations of invariant subspaces can be valid on Hardy space over unit polydisc $\D^n$, $n >1$. The answer is negative even for $n=2$ (see Rudin \cite[Theorems 4.1.1 and 4.4.2]{R}). On the other hand, recently in \cite{MMSS}, an abstract classification of invariant subspaces of the Hardy space over the unit polydisc has been proposed. Here we refine the method of \cite{MMSS} to handle the noncommutative tuples and carry out the classification of submodules of $F^2_{\n}$.

Let $\cle$ and $\clk$ be Hilbert spaces and let $T \in \clb(\clk)$. We treat $\clk$ as a $\C[Z]$-Hilbert module corresponding to $T$. Now consider the $\cle$-valued Fock module $F^2_n \otimes \cle$ as $\C\langle Z_1,\ldots,Z_n\rangle$-Hilbert module corresponding to $S \otimes I_{\cle} = (S_1 \otimes I_{\cle}, \ldots, S_n \otimes I_{\cle})$. Consider the free algebra
\[
\C\langle Z_1,\ldots,Z_n\rangle \otimes_{\C} \C[Z] = \C\langle Z_1,\ldots,Z_n, Z\rangle,
\]
generated by the indeterminates $\{Z_1, \ldots, Z_n, Z\}$. Note that
\[
(Z \otimes 1) (1 \otimes Z_i) - (1 \otimes Z_i) (Z \otimes 1) =0 \quad \quad (i =1, \ldots, n).
\]
Now we treat $(F^2_n \otimes \clk) \otimes \cle$ as a $\C\langle Z_1,\ldots,Z_n, Z\rangle$-Hilbert module, where $Z_i$ and $Z$ corresponds to $(S_i \otimes I_{\mathcal K}) \otimes I_{\cle}$ and $(I_{F^2_n}\otimes T) \otimes I_{\cle}$, respectively, and $1\Le i\Le n$. Note that since $(S_i \otimes I_{\mathcal K}) \otimes I_{\cle}$, $i = 1, \ldots, n$, commutes (and doubly commutes) with $(I_{F^2_n} \otimes T) \otimes I_{\cle}$, the above identification is well defined.

On the other hand, let $\clh$ be a $\C\langle Z_1,\ldots,Z_n\rangle$-Hilbert module corresponding to a pure isometric tuple $V\in \mathfrak B^{(n)}(\H)$ and let $T\in \mathcal B(\H)$. Suppose $T$ commutes and also doubly commutes with $V$. Let $F^2_n \otimes \cle$ be the identification of $\clh$ as in \eqref{eq-LV}. It follows from Lemma \ref{pure-smodules} that the representation of $T$ in $F^2_n \otimes \cle$ is a constant multi-analytic operator.

The above discussion is the underlying theme of this section, where we aim at characterizing joint invariant subspaces of Fock $\n$-module $F^2_{\n}$. Such a characterization is a consequence of the following key theorem.

\begin{theorem}\label{first-BLHT}
Let $\cle_*$ be a Hilbert space, $T \in \clb(F^2_n \otimes \cle_*)$ and let $\clm$ a closed subspace of $F^2_n \otimes \cle_*$. Let
\[
\cle = \clm \ominus \sum_{i=1}^n(S_i \otimes I_{\cle_*}) \clm.
\]
and suppose $T (S_i \otimes I_{\cle_*}) = (S_i \otimes I_{\cle_*}) T$ for all $i = 1, \ldots, n$. Then the following statements are equivalent:

\begin{itemize}
\item[(a)] $\mathcal M$ is submodule of $F^2_n \otimes \cle_*$ and $T\clm \subseteq \clm$.

\item[(b)] There exist an inner multi-analytic operator $\Theta \in R_n^{\infty}\overline{\otimes} \clb(\cle, \cle_*)$ and a multi-analytic operator $\Phi \in R^{\infty}_n\overline{\otimes} \mathcal B(\cle)$ such that
\[
\clm  =\Theta(F^2_n \otimes \cle),
\]
and
\[
T \Theta = \Theta \Phi.
\]
\end{itemize}
Moreover, if either of the above conditions hold, then the Fourier coefficients of $\Phi$ are given by
\[
\vp_{\alpha^{t}} = P_{ \cle} (S^{\alpha*}\otimes I_{\cle_*})T|_{\cle} \quad \quad (\alpha \in F^+_n).
\]
\end{theorem}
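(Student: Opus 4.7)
The plan is to reduce the theorem to two earlier results: the noncommutative Beurling--Lax--Halmos theorem (Theorem \ref{BLHT}) for the existence of $\Theta$, and Theorem \ref{comm-fourier-coeff} for the existence of $\Phi$ together with the formula for its Fourier coefficients.

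The implication (b) $\Rightarrow$ (a) is immediate: once $\clm = \Theta(F^2_n \otimes \cle)$ with $\Theta$ inner multi-analytic, Theorem \ref{BLHT} (or a direct check) tells us $\clm$ is a submodule, while $T\Theta = \Theta \Phi$ gives $T\clm = T\Theta(F^2_n \otimes \cle) = \Theta\Phi(F^2_n \otimes \cle) \subseteq \Theta(F^2_n \otimes \cle) = \clm$.

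For (a) $\Rightarrow$ (b), I would first invoke Theorem \ref{BLHT} to obtain an inner multi-analytic operator $\Theta \in R_n^\infty \overline{\otimes} \clb(\cle, \cle_*)$ with $\clm = \Theta(F^2_n \otimes \cle)$; in the notation of that theorem's proof, $\Theta = \iota_{\clm} L_V^*$, where $V = ((S_1 \otimes I_{\cle_*})|_{\clm}, \ldots, (S_n \otimes I_{\cle_*})|_{\clm})$ and $L_V : \clm \to F^2_n \otimes \cle$ is the canonical module unitary operator. By Lemma \ref{pure-contractive-sub-module}, $V$ is a pure isometric tuple on $\clm$. Since $T$ commutes with each $S_i \otimes I_{\cle_*}$ and $\clm$ is invariant under both $T$ and the $S_i \otimes I_{\cle_*}$, the compression $T|_{\clm}$ is a well-defined element of $\{V_1, \ldots, V_n\}'$. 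Applying Theorem \ref{comm-fourier-coeff} to this commutator produces a multi-analytic operator $\Phi \in R^\infty_n \overline{\otimes} \clb(\cle)$ with $L_V T|_{\clm} L_V^* = \Phi$ and Fourier coefficients $\vp_{\alpha^t} = P_{\cle} V^{\alpha*} T|_{\cle}$.

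Next I would verify $T\Theta = \Theta \Phi$: using $\Theta = \iota_{\clm} L_V^*$ and the identity $T\iota_{\clm} = \iota_{\clm} T|_{\clm}$ (which follows from $T\clm \subseteq \clm$), we compute
\[
\Theta \Phi = \iota_{\clm} L_V^* \Phi = \iota_{\clm} L_V^* L_V T|_{\clm} L_V^* = \iota_{\clm} T|_{\clm} L_V^* = T\iota_{\clm} L_V^* = T\Theta.
\]
The remaining task is to translate the formula for $\vp_{\alpha^t}$ into the stated form. For $\eta \in \cle \subseteq \clm$ one has $T\eta \in \clm$, hence
\[
V^{\alpha *} T\eta = P_{\clm} (S^{\alpha *} \otimes I_{\cle_*}) T\eta,
\]
and since $\cle \subseteq \clm$ gives $P_{\cle} P_{\clm} = P_{\cle}$, we get $\vp_{\alpha^t} = P_{\cle}(S^{\alpha *} \otimes I_{\cle_*}) T|_{\cle}$ as claimed. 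The only step requiring genuine care is this last bookkeeping, namely ensuring that the intrinsic formula from Theorem \ref{comm-fourier-coeff} (which a priori involves the adjoint of the compressed tuple $V$) correctly simplifies to an expression written in terms of the ambient operators $S_i \otimes I_{\cle_*}$; everything else is a direct assembly of previously established ingredients.
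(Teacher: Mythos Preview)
Your proposal is correct and follows essentially the same approach as the paper: both invoke Theorem \ref{BLHT} to produce $\Theta = \iota_{\clm} L_V^*$, then apply Theorem \ref{comm-fourier-coeff} to $T|_{\clm} \in \{V_1,\ldots,V_n\}'$ to obtain $\Phi = L_V T|_{\clm} L_V^*$, and finally simplify the Fourier coefficients using $V^{\alpha*} = P_{\clm}(S^{\alpha*}\otimes I_{\cle_*})|_{\clm}$ together with $P_{\cle}P_{\clm} = P_{\cle}$. The only cosmetic differences are that the paper cites Douglas' range-inclusion theorem for (b)$\Rightarrow$(a) and records the intermediate identity $\Phi = \Theta^* T \Theta$ before deducing $T\Theta = \Theta\Phi$, whereas you verify both steps by direct computation.
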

\begin{proof}
The implication $(b) \Rightarrow (a)$ follows from the well known Douglas' range-inclusion theorem. So we proceed to prove that $(a) \Rightarrow (b)$. Suppose $\clm$ is a submodule of $F^2_n \otimes \cle_*$ and suppose that $\clm$ is $T$-invariant. Define $\tilde{T}= T|_{\clm}$ and
\[
(S \otimes I_{\cle_*})|_{\clm} = ((S_1 \otimes I_{\cle_*})|_{\clm}, \ldots, (S_n \otimes I_{\cle_*})|_{\clm}).
\]
Clearly $(S_i \otimes I_{\cle_*})|_{\clm}$ is an isometry and $[(S_i \otimes I_{\cle_*})|_{\clm}, \tilde{T}] = 0$ for all $i=1, \ldots, n$. Hence, taking into account of Lemma \ref{pure-contractive-sub-module}, it follows that $(S \otimes I_{\cle_*})|_{\clm}$ is a pure isometric tuple. Now we are in the setting of the proof of Theorem \ref{BLHT}. Therefore
\[
\clm = \Theta  (F^2_n \otimes \mathcal E),
\]
where
\[
\cle = \mathop{\bigcap}_{i=1}^n \ker \Big((S_i \otimes I_{\cle_*})|_{\clm}\Big)^*,
\]
and
\[
\Theta  = \iota_{\clm} L^*_{(S \otimes I_{\cle_*})|_{\clm}} \in R^{\infty}_n \overline{\otimes} \clb(\cle ,\cle_*).
\]
Moreover, by Theorem \ref{comm-fourier-coeff}, there exists $\Phi \in R^{\infty}_{n} \overline{\otimes} \clb(\cle)$ such that
\[
\Phi= L_{(S \otimes I_{\cle_*})|_{\clm}} \tilde T L_{(S \otimes I_{\cle_*})|_{\clm}}^*,
\]
and the Fourier coefficients of $\Phi$ are given by
\[
\vp_{\alpha^{t}} = P_{\cle} \Big((S \otimes I_{\cle_*})|_{\clm}\Big)^{\alpha*} \tilde T|_{\cle} \quad \quad (\alpha \in F_n^+).
\]
Now for each $\alpha \in F_n^+$ we have
\[
P_{\cle} \Big((S \otimes I_{\cle_*})|_{\clm}\Big)^{\alpha*} \tilde T|_{\cle} = P_{\cle} P_{\clm} (S \otimes I_{\cle_*})^{\alpha*}|_{\clm} \tilde T|_{\cle} = P_{\cle} P_{\clm} (S^{\alpha*} \otimes I_{\cle_*}) \tilde T|_{\cle},
\]
hence, by the fact that $\cle \subseteq \clm$, we have 
\[
P_{\cle} \Big((S \otimes I_{\cle_*})|_{\clm}\Big)^{\alpha*} \tilde T|_{\cle}  = P_{\cle} (S^{\alpha*} \otimes I_{\cle_*}) \tilde T|_{\cle}.
\]
Finally, from the definitions of $\Theta$ and $\Phi$ above and the fact that $\tilde{T} = T|_{\clm} = \iota^*_{\clm} T \iota_{\clm}$, we conclude that
\[
\Phi= L_{(S \otimes I_{\cle_*})|_{\clm}} \tilde T L_{(S \otimes I_{\cle_*})|_{\clm}}^* = L_{(S \otimes I_{\cle_*})|_{\clm}} \iota^*_{\clm} T \iota_{\clm} L_{(S \otimes I_{\cle_*})|_{\clm}}^* = \Theta^* T \Theta,
\]
that is
\begin{equation}\label{eq-thetaphithete}
\Phi = \Theta^* T \Theta,
\end{equation}
and hence
\[
\Theta  \Phi= T \Theta,
\]
as $\Theta \Theta^* =P_{\mathcal M}$ and $\mbox{ran} (T \Theta) \subseteq \clm$. This completes the proof of the theorem.
\end{proof}

\begin{remark} \label{phi-is-shift}
In the setting of Theorem \ref{first-BLHT}, if, in addition, $T$ is an isometry, then
\[
\|\Phi h\| = \|\Theta \Phi h\| = \|T \Theta h\| = \|h\| \quad \quad (h \in F^2_n \otimes \cle),
\]
and hence it follows that $\Phi$ is also an isometry. Suppose now that $T$ is pure, that is, $T^{*m} \raro 0$ as $m \raro \infty$ in the strong operator topology. Then
\[
\Phi^{*m} = \Theta^* T^{*m} \Theta,
\]
for all $m \geq 1$ implies that $\Phi$ is also pure.
\end{remark}

Now we proceed to submodules of the Fock $\n$-module $F^2_{\n}$. Recall that the Fock $\n$-module $F^2_{\n}$, for $\n = (n_1, \ldots, n_k) \in \mathbb{N}^k$,  is given by
\[
F^2_{\n} = F^2_{n_1} \otimes \cdots \otimes F^2_{n_k}.
\]
Clearly, if we denote by
\[
\C\langle \bm{Z} \rangle_{\n}:= \C\langle Z_1, \ldots, Z_{n_1} \rangle \otimes_{\C} \cdots \otimes_{\C} \C\langle Z_1, \ldots, Z_{n_k} \rangle,
\]
the tensor product of free algebras over $\C$, then $F^2_{\n}$ is naturally a $\C\langle \bm{Z}\rangle_{\n}$-Hilbert module corresponding to $\BS = (\BS_{n_1}, \ldots, \BS_{n_k})$. From this point of view, a closed subspace $\clm \subseteq F^2_{\n}$ is said to be a submodule if
\[
X \clm \subseteq \clm,
\]
for all $X \in \BS_{n_i}$, $i = 1, \ldots, k$. Now if we set
\[
\cle_{\n} = F^2_{n_2} \otimes \cdots \otimes F^2_{n_k},
\]
then Theorem \ref{first-BLHT} (applied to $\cle_{\n}$ in place of $\cle_*$) directly leads to the following corollary concerning representations of submodules of $F^2_{\n}$:

\begin{corollary}\label{second-BLHT}
Let $\clm$ be a closed subspace of the Fock $\n$-module $F^2_{\n}$. Let
\[
\cle = \clm \ominus \sum_{j=1}^{n_1} \BS_{1j} \clm,
\]
and $\cle_{\n} = F^2_{n_2} \otimes \cdots \otimes F^2_{n_k}$. Then $\clm$ is a submodule of $F^2_{\n}$ if and only if there exist an inner multi-analytic operator $\Theta \in R^{\infty}_{n_1} \overline \otimes \clb(\cle, \cle_{\n})$ and multi-analytic operators $\Phi_{ij} \in R_{n_1}^{\infty} \overline \otimes \clb(\cle)$, $i=2, \ldots, k$, and $j = 1, \ldots, n_i$, such that
\beqn
\mathcal M = \Theta (F^2_{n_1}\otimes \mathcal E),
\eeqn
and
\[
\BS_{ij} \Theta = \Theta \Phi_{ij}.
\]
In this case, the Fourier coefficients of $\Phi_{ij}$ are given by
\[
\vp_{ij,\alpha^{t}} = P_{\cle} (S^{\alpha*}\otimes I_{\cle_{\n}}) \BS_{ij}|_{\cle},
\]
for all $\alpha \in F^+_{n_1}$, $i=2, \ldots, k$, and $j = 1, \ldots, n_i$.
\end{corollary}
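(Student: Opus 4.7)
The plan is to reduce the statement to a repeated application of Theorem \ref{first-BLHT} via the canonical identification
\[
F^2_{\n} \cong F^2_{n_1} \otimes \cle_{\n}, \qquad \cle_{\n} = F^2_{n_2} \otimes \cdots \otimes F^2_{n_k},
\]
under which $\BS_{1j}$ becomes exactly $S_j \otimes I_{\cle_{\n}}$ for $j = 1, \ldots, n_1$. The essential feature that makes Theorem \ref{first-BLHT} applicable is that for each $i \in \{2, \ldots, k\}$ and $j \in \{1, \ldots, n_i\}$, the operator $\BS_{ij}$ acts as the identity on the first tensor factor, so it commutes with every $S_{j'} \otimes I_{\cle_{\n}}$, $j' = 1, \ldots, n_1$.

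For the forward direction, assume $\clm$ is a submodule of $F^2_{\n}$. I would first exploit only the $\BS_{n_1}$-invariance of $\clm$ to produce, by Theorem \ref{BLHT} (equivalently, Theorem \ref{first-BLHT} applied in the absence of $T$), an inner multi-analytic operator $\Theta \in R^{\infty}_{n_1} \overline\otimes \clb(\cle, \cle_{\n})$ with $\cle = \clm \ominus \sum_{j=1}^{n_1}\BS_{1j}\clm$ such that $\clm = \Theta(F^2_{n_1} \otimes \cle)$. Then, for each fixed pair $(i, j)$ with $i \geq 2$, I apply Theorem \ref{first-BLHT} with $T := \BS_{ij}$: the commutation of $T$ with each $S_{j'} \otimes I_{\cle_{\n}}$ holds by the observation above, and $\clm$ is $T$-invariant by the submodule hypothesis. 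The theorem accordingly supplies a multi-analytic $\Phi_{ij} \in R^{\infty}_{n_1} \overline\otimes \clb(\cle)$ with $\BS_{ij}\Theta = \Theta \Phi_{ij}$. The point that requires care is that Theorem \ref{first-BLHT} must furnish \emph{the same} $\Theta$ and the same $\cle$ across all choices of $T$; this is indeed the case because both objects are constructed purely from $\clm$ and the first-coordinate shifts, with no reference to $T$. The Fourier coefficient formula
\[
\vp_{ij,\alpha^{t}} = P_{\cle}(S^{\alpha*} \otimes I_{\cle_{\n}})\BS_{ij}|_{\cle}
\]
is then read off directly from the corresponding formula in Theorem \ref{first-BLHT}.

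For the converse, if $\Theta$ and $\{\Phi_{ij}\}$ are as in the conclusion, then $\clm = \Theta(F^2_{n_1}\otimes\cle)$ is automatically $\BS_{1j}$-invariant because $\Theta$ intertwines the creation operators on the first factor, while $\BS_{ij}\Theta = \Theta \Phi_{ij}$ gives $\BS_{ij}\clm \subseteq \clm$ for $i \geq 2$. The only genuinely non-routine step is the opening identification that singles out the first tensor direction and packages the remaining factors into the coefficient Hilbert space $\cle_{\n}$; once this is in place, the corollary is essentially a bookkeeping translation of Theorem \ref{first-BLHT} to the multi-tensor setting, with no new analytic input required.
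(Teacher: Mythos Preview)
Your proposal is correct and matches the paper's approach exactly: the paper derives the corollary by simply applying Theorem~\ref{first-BLHT} with $\cle_{\n}$ in place of $\cle_*$ and $T=\BS_{ij}$ for each $i\ge 2$, $j=1,\ldots,n_i$, noting (as you do) that the resulting $\Theta$ and $\cle$ depend only on $\clm$ and the first-factor shifts and hence are the same for every choice of $T$.
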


\begin{remark}\label{remark-13}
A few comments about the above classification result are in order.

\begin{enumerate}
\item In view of our notation $\BS_{n_1} = (\BS_{11}, \ldots, \BS_{1 n_1})$, the Fourier coefficients of $\Phi_{ij}$ can be further simplified to
\[
\vp_{ij,\alpha^{t}} = P_{\cle} \BS_{n_1}^{\alpha*} \BS_{ij}|_{\mathcal E},
\]
for all $\alpha \in F^+_{n_1}$.

\item In view of Remark \ref{phi-is-shift}, it follows that $\Phi_{ij}$ is a pure isometry for each $i=2, \ldots, k$, and $j = 1, \ldots, n_i$.

\item Fix $i \in \{2, \ldots, k\}$. Then by \eqref{eq-thetaphithete}, it follows that
\[
\Phi_{ij} = \Theta^* \BS_{ij} \Theta,
\]
for all $j=1, \ldots, n_i$. Consequently
\[
\Phi_{ip}^* \Phi_{iq} = \delta_{pq} I_{F^2_{n_1} \otimes \cle},
\]
for all $p, q = 1, \ldots, n_i$. Hence, the $n_i$-tuple
\[
\BP_{n_i} = (\Phi_{i1}, \ldots, \Phi_{in_i}),
\]
is a pure isometric tuple on $F^2_{n_1} \otimes \cle$ for all $i=2, \ldots, k$.

\item In the setting of Corollary \ref{second-BLHT}, if $\clm$ is a submodule of $F^2_{\n}$, then
\[
(\BS_{n_1}|_{\clm}, \BS_{n_2}|_{\clm}, \ldots, \BS_{n_k}|_{\clm}) \; \mbox{on} \; \clm,
\]
and
\[
(\BS_{n_1} \otimes I_{\cle}, \BP_{n_2}, \ldots, \BP_{n_k}) \; \mbox{on} \; F^2_{n_1} \otimes \cle,
\]
     are unitarily equivalent.
\end{enumerate}
\end{remark}

We conclude this section with the uniqueness of $(\BP_{n_2}, \ldots, \BP_{n_k})$ on $F^2_{n_1} \otimes \cle$:

\begin{theorem}
In the setting of Corollary \ref{second-BLHT}, let $\clm = \tilde{\Theta} (F^2_{n_1} \otimes \tilde{\cle})$ for some Hilbert space $\tilde{\cle}$ and an inner multi-analytic map $\tilde{\Theta} \in R_{n_1}^\infty \overline{\otimes}\clb(\tilde{\cle}, \cle_{\n})$, and let
\beqn
\BS_{ij} \tilde{\Theta} = \tilde{\Theta} \tilde{\Phi}_{ij},
\eeqn
for some pure isometry $\tilde{\Phi}_{ij} \in R_{n}^{\infty} \overline \otimes \clb(\tilde{\cle})$ and $i=2, \ldots, k$ and $j=1, \ldots, n_i$. Then there exists a unitary $\tau : \mathcal{E} \rightarrow \tilde{\mathcal E}$ such that
\[
\tilde{\Theta}(I_{F^2_{n_1}} \otimes \tau) = \Theta,
\]
and
\[
(I_{F^2_{n_1}} \otimes \tau) \Phi_{ij} =  \tilde{\Phi}_{ij} (I_{F^2_{n_1}} \otimes \tau),
\]
for all $i = 2, \ldots, k$, and $j = 1, \ldots, n_i$.
\end{theorem}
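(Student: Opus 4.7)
The plan is to split the conclusion into two parts: first to produce the unitary $\tau\colon\cle\to\tilde\cle$ satisfying $\tilde\Theta(I_{F^2_{n_1}}\otimes\tau)=\Theta$, and then to deduce the intertwining relations $(I_{F^2_{n_1}}\otimes\tau)\Phi_{ij}=\tilde\Phi_{ij}(I_{F^2_{n_1}}\otimes\tau)$ by algebraic manipulation of the two commutation identities $\BS_{ij}\Theta=\Theta\Phi_{ij}$ and $\BS_{ij}\tilde\Theta=\tilde\Theta\tilde\Phi_{ij}$. Both parts should follow cleanly from Theorem \ref{BLHT} and the fact that $\tilde\Theta$ is inner.

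For the first step, I would apply the uniqueness clause of Theorem \ref{BLHT} in the ambient space $F^2_{n_1}\otimes\cle_{\n}$. By hypothesis, the submodule $\clm$ admits two inner representations
\[
\clm=\Theta(F^2_{n_1}\otimes\cle)=\tilde\Theta(F^2_{n_1}\otimes\tilde\cle),
\]
so the theorem produces a unitary $\tau_{0}\in\clb(\tilde\cle,\cle)$ with $\tilde\Theta=\Theta(I_{F^2_{n_1}}\otimes\tau_{0})$. Setting $\tau:=\tau_{0}^{-1}\in\clb(\cle,\tilde\cle)$ then yields $\tilde\Theta(I_{F^2_{n_1}}\otimes\tau)=\Theta$ on the nose.

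For the intertwining, fix $i\in\{2,\ldots,k\}$ and $j\in\{1,\ldots,n_{i}\}$. Substituting $\Theta=\tilde\Theta(I_{F^2_{n_1}}\otimes\tau)$ into the two commutation relations and chaining them gives
\[
\tilde\Theta\,\tilde\Phi_{ij}(I_{F^2_{n_1}}\otimes\tau)=\BS_{ij}\tilde\Theta(I_{F^2_{n_1}}\otimes\tau)=\BS_{ij}\Theta=\Theta\Phi_{ij}=\tilde\Theta(I_{F^2_{n_1}}\otimes\tau)\Phi_{ij}.
\]
Because $\tilde\Theta$ is inner, it is an isometry, so $\tilde\Theta^{*}\tilde\Theta=I$; applying $\tilde\Theta^{*}$ on the left cancels $\tilde\Theta$ from both sides and delivers the desired identity $(I_{F^2_{n_1}}\otimes\tau)\Phi_{ij}=\tilde\Phi_{ij}(I_{F^2_{n_1}}\otimes\tau)$.

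There is essentially no obstacle: the real content lies in Theorem \ref{BLHT}, which furnishes $\tau$ for free, and the subsequent cancellation of $\tilde\Theta$ is legitimate only because $\tilde\Theta$ is isometric (this is precisely where the inner hypothesis on $\tilde\Theta$ is used). The purity assumption on $\tilde\Phi_{ij}$ is not needed for the proof itself, although one may note that this purity is in fact automatic in view of Remark \ref{phi-is-shift} once the intertwining has been established.
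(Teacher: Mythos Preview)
Your proposal is correct and matches the paper's own proof essentially verbatim: both invoke the uniqueness clause of Theorem \ref{BLHT} to obtain the unitary $\tau$, then chain the two intertwining relations and cancel $\tilde\Theta$ using that it is an isometry. Your additional remark that the purity of $\tilde\Phi_{ij}$ is not used in the argument is also accurate.
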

\begin{proof}
By Theorem \ref{BLHT}, there exists a unitary $\tau \in
\clb(\cle,\tilde{\cle})$ such that $\tilde{\Theta}(I_{F^2_{n_1}} \otimes \tau) = \Theta$. Moreover
\[
\tilde{\Theta} (I_{F^2_{n_1}} \otimes \tau) {\Phi}_{ij} = \Theta {\Phi}_{ij} = \BS_{ij} \Theta = \BS_{ij} \tilde{\Theta} (I_{F^2_{n_1}} \otimes \tau),
\]
that is
\[
\tilde{\Theta} (I_{F^2_{n_1}} \otimes \tau) {\Phi}_{ij} = \tilde{\Theta} \tilde{\Phi}_{ij} (I_{F^2_{n_1}} \otimes \tau).
\]
The result now follows from the fact that $\tilde{\Theta}$ is an isometry.
\end{proof}

In particular, if $(n_1, \ldots,n_k) = (1,\ldots,1)$, then the Fock module $F^2_{\n}$ is the Hardy module $H^2(\D^k)$, and hence Corollary \ref{second-BLHT} (and the uniqueness theorem above) recovers \cite[Theorem 3.2]{MMSS}. However, it is worth mentioning that the representation of submodules in Corollary \ref{second-BLHT} somewhat finer than \cite[Theorem 3.2]{MMSS}. The major difference here is the coordinate free approach to submodules of $F^2_{\n}$ as in Theorem \ref{first-BLHT} (for instance, the formalism of $\kappa_i$ in \cite[Theorem 3.1]{MMSS} is not essential in the present consideration). This slightly different technical advantage may actually result in the further development of multivariable operator theory in noncommutative polydomains.

\section{Noncommutative varieties and submodules}\label{section-variety}

First, we briefly recall the necessary definitions and results about noncommutative varieties in $\clb(\clh)^n$ from \cite{Po06}. Given a weak operator topology closed two-sided proper ideal $J$ of $F_n^\infty$, the {\it non-commutative variety} $\mathcal{V}_{J}(\mathcal H)$ corresponding to $J$ is defined by
\beqn
\mathcal{V}_{J}(\clh)  = \{ (X_1, \ldots,X_n) \in {\mf B}^{(n)}( \H): \vp (X_1, \ldots,X_n) = 0 \mbox{ for all } \vp \in   J  \},
\eeqn
where $\vp (X_1, \ldots,X_n)$ is defined in the sense of Popescu's $F_n^\infty$ non-commutative functional calculus for completely non-coisometric contractions \cite{Po95}.

Now let $J$ be a weak operator topology closed two-sided proper ideal in $F^{\infty}_n$. Define
\[
M_J := \overline{JF^2_n} \quad \mbox{and} \quad N_J := F^2_n \ominus JF^2_n.
\]
Since $J$ is a two-sided weakly closed ideal, it follows that $M_J$ is a submodule of $F^2_n$, and hence $N_J$ is a quotient module of $F^2_n$ \cite[Lemma 1.1]{Po06}. Moreover
\[
M_J = \overline{\mbox{span}} \{e_{\alpha} \vp(1) : \vp \in J, \alpha \in F_n^+\} \quad \mbox{ and } \quad N_J = \bigcap_{\vp \in J} \ker \vp^*.
\]
Following \cite{Po06}, define the {\it constrained left (respectively, right) creation operators} as
\[
B_i := P_{N_J} S_i |_{N_J} \quad \mbox{and} \quad W_i := P_{N_J} R_i |_{N_J},
\]
respectively, for all $i=1, \ldots, n$. Therefore
\[
B=(B_1,\ldots,B_n) \quad \mbox{and} \quad W = (W_1, \ldots, W_n),
\]
are $n$-tuples of constrained creation operators on $N_J$. A closed subspace $\clm \subseteq  N_{J} \otimes \clk$, for some Hilbert space $\clk$, is called a \textit{submodule} if $(B_i \otimes I_{\clk}) \clm \subseteq \clm$ for all $i=1, \ldots, n$.

The remarkable fact is that $B \in \mathcal{V}_{J}(N_{J})$ and this constrained tuple $B$ plays the role of model tuple for tuples of operators in noncommutative domains \cite{Po06}. Moreover, we have Popescu's Beurling-Lax-Halmos type theorem for submodules of $N_{J} \otimes \clk$ corresponding to the constrained tuple $(B_1 \otimes I_{\clk}, \ldots, B_n \otimes I_{\clk})$ on $N_J \otimes \clk$ for Hilbert spaces $\clk$:

\begin{theorem}\label{popescu-proof}
\cite[Popescu, Theorem 1.2]{Po06} Let $J \subsetneq F^{\infty}_n$ be a weakly closed two-sided ideal, and let $\mathcal K$ be a Hilbert space. A closed subspace $\clm \subseteq N_{J} \otimes \clk$ is a submodule if and only if there exist a
Hilbert space $\mathcal G$ and a partial isometry
\[
\Theta(W_1,\ldots,W_n) \in \mathscr W (W_1,\ldots,W_n)\overline \otimes B(\mathcal G, \mathcal K)
\]
such that
\[
\mathcal M = \Theta(W_1,\ldots ,W_n) (N_J \otimes \mathcal G).
\]
\end{theorem}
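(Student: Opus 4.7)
The plan is to reduce to the unconstrained noncommutative Beurling--Lax--Halmos theorem (Theorem \ref{BLHT}) by lifting $\clm$ to a submodule of the ambient Fock module $F^2_n \otimes \clk$ and then descending the resulting inner multi-analytic operator to a partial isometry via compression onto $N_J$. The easy implication $(\Leftarrow)$ follows at once: every element of $\mathscr{W}(W_1,\ldots,W_n) \overline{\otimes} \clb(\clg,\clk)$ intertwines the constrained left creation operators, so $\Theta(N_J \otimes \clg)$ is automatically a submodule of $N_J \otimes \clk$.

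For the converse I would first set
\[
\tilde{\clm} := \clm + (M_J \otimes \clk) \subseteq F^2_n \otimes \clk
\]
and verify that $\tilde{\clm}$ is invariant under each $S_i \otimes I_\clk$. Invariance on $M_J \otimes \clk$ is built in, while for $m \in \clm$ the decomposition $(S_i \otimes I_\clk)m = (B_i \otimes I_\clk)m + (P_{M_J}S_i \otimes I_\clk)m$ places both pieces in $\tilde{\clm}$ because $\clm$ is a submodule of $N_J \otimes \clk$. Since $\clm \perp M_J \otimes \clk$, this is in fact an orthogonal direct sum. Theorem \ref{BLHT} then supplies a Hilbert space $\clg$ and an inner multi-analytic operator $\tilde{\Theta} \in R^\infty_n \overline{\otimes} \clb(\clg,\clk)$ with $\tilde{\clm} = \tilde{\Theta}(F^2_n \otimes \clg)$.

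Next I would exploit that $J$ is \emph{two-sided} to show
\[
\tilde{\Theta}(M_J \otimes \clg) \subseteq M_J \otimes \clk.
\]
Indeed, $\tilde{\Theta}$ intertwines $\vp \otimes I$ for every $\vp \in F^\infty_n$ (as $R^\infty_n$ and $F^\infty_n$ commute), and for $\vp \in J$, $f \in F^2_n$, $g \in \clg$ one gets $\tilde{\Theta}(\vp f \otimes g) = (\vp \otimes I_\clk)\tilde{\Theta}(f \otimes g) \in M_J \otimes \clk$; density of $JF^2_n$ in $M_J$ finishes the inclusion. Setting
\[
\Theta := (P_{N_J} \otimes I_\clk)\,\tilde{\Theta}\big|_{N_J \otimes \clg},
\]
and applying $P_{N_J} \otimes I_\clk$ to $\tilde{\clm} = \clm \oplus (M_J \otimes \clk) = \tilde{\Theta}(F^2_n \otimes \clg)$ gives $\clm = \Theta(N_J \otimes \clg)$. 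Moreover, the Fourier expansion $\tilde{\Theta} = \sum_\alpha R^\alpha \otimes \theta_\alpha$ compresses term-by-term to $\sum_\alpha W^\alpha \otimes \theta_\alpha$, using that $M_J$ is $R_i$-invariant (so $N_J$ is $R_i^*$-invariant and $P_{N_J}R^\alpha|_{N_J} = W^\alpha$), which places $\Theta \in \mathscr{W}(W_1,\ldots,W_n) \overline{\otimes} \clb(\clg,\clk)$.

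The main obstacle is upgrading $\Theta$ to a partial isometry. A direct computation gives
\[
\|\Theta h\|^2 = \|h\|^2 - \|(P_{M_J} \otimes I_\clk)\tilde{\Theta}h\|^2 \quad (h \in N_J \otimes \clg),
\]
so $\Theta$ is isometric exactly on the subspace where $\tilde{\Theta}h$ lies in $N_J \otimes \clk$, and for the generic $\clg$ produced by Theorem \ref{BLHT} this subspace need not coincide with $(\ker\Theta)^\perp$. I expect to resolve this either by replacing $\clg$ with a canonically chosen wandering subspace arising from the minimal isometric dilation of the constrained tuple $(B_i \otimes I_\clk)|_\clm$, or by invoking a commutant lifting argument for the intertwiner $\iota_\clm$ whose polar decomposition directly exhibits the required partial isometry in $\mathscr{W}(W_1,\ldots,W_n) \overline{\otimes} \clb(\clg,\clk)$. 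I anticipate this last step is where the bulk of the technical content sits.
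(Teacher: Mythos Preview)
Your lifting to $\tilde{\clm} = \clm \oplus (M_J \otimes \clk)$, applying the Fock Beurling--Lax--Halmos theorem, and compressing is exactly the paper's route (carried out inside the proof of Theorem~\ref{BLHT-variety}). What you flag as ``the main obstacle'' is not one: you are computing $\|\Theta h\|^2$, i.e.\ probing $\Theta^*\Theta$, when the clean identity lives on the other side. Your own inclusion $\tilde{\Theta}(M_J \otimes \clg) \subseteq M_J \otimes \clk$ dualizes to $\tilde{\Theta}^*(N_J \otimes \clk) \subseteq N_J \otimes \clg$, so
\[
\Theta\,\Theta^* \;=\; P_{N_J \otimes \clk}\,\tilde{\Theta}\,P_{N_J \otimes \clg}\,\tilde{\Theta}^*\big|_{N_J \otimes \clk}
\;=\; P_{N_J \otimes \clk}\,\tilde{\Theta}\,\tilde{\Theta}^*\big|_{N_J \otimes \clk}
\;=\; P_{N_J \otimes \clk}\,P_{\tilde{\clm}}\big|_{N_J \otimes \clk}
\;=\; P_{\clm},
\]
the last step because $\tilde{\clm} = \clm \oplus (M_J \otimes \clk)$ orthogonally and $\clm \subseteq N_J \otimes \clk$. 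Hence $\Theta$ is automatically a partial isometry with range $\clm$; no further commutant lifting or change of $\clg$ is required. (This is precisely equation~\eqref{eq-17.1} and the final computation in the proof of Theorem~\ref{BLHT-variety}.)

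One refinement the paper makes that you do not: it splits the wandering subspace $\tilde{\cle}$ of $\tilde{\clm}$ as $\clw \oplus \cle$, with $\clw$ the wandering subspace of $M_J \otimes \clk$, and uses the smaller $\cle$ in place of your full $\clg = \tilde{\cle}$. This is irrelevant for the partial isometry property (both choices work by the computation above), but the paper exploits it later to show $\cle \subseteq N_J \otimes \clk$ (Lemma~\ref{lemma E subset NJ E}) and to pin down the Fourier coefficients of the companion operator $\Phi$. Your first proposed fix --- passing to a canonical wandering subspace --- is therefore in the spirit of the paper, but for a different purpose than you anticipate.
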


Recall here that $\mathscr W({B}_{1},\ldots,{ B}_{n})$ is the $w^*$-closed (or, weak operator topology closed, as they coincide in this particular situation) algebra generated by $\{I_{N_J}, {B}_{1},\ldots,{ B}_{n}\}$, and (see Popescu \cite[page 396]{Po06} and also see Arias and Popescu \cite{ArPo00})
\[
\mathscr W({B}_{1},\ldots,{ B}_{n}) = P_{N_J} F^\infty_n|_{N_J} = \{\vp(B_1, \ldots, B_n): \vp(S_1, \ldots, S_n) \in F_n^\infty\},
\]
and
\[
\mathscr W({B}_{1},\ldots,{ B}_{n})' = \mathscr  W({W}_{1},\ldots,{ W}_{n}) \quad \mbox{and} \quad \mathscr  W({W}_{1},\ldots,{ W}_{n})' = \mathscr W({B}_{1},\ldots,{ B}_{n}).
\]
Moreover, the noncommutative version of intertwiner lifting \cite{Po89-1} implies that
\begin{equation}\label{eq-W R n infty}
\mathscr  W({W}_{1},\ldots,{W}_{n}) \overline{\otimes} \mathcal B(\cle,\cle_*) = P_{N_{J} \otimes \cle} [R^{\infty}_{n} \overline{\otimes} \mathcal B(\cle, \cle_*)]|_{N_{J} \otimes \cle_*},
\end{equation}
for Hilbert spaces $\cle$ and $\cle_*$. A similar statement holds for $\mathscr  W({B}_{1},\ldots,{B}_{n}) \overline{\otimes} \mathcal B(\cle,\cle_*)$. The elements of $\mathscr W({B}_{1},\ldots,{ B}_{n})$ and $\mathscr  W({W}_{1},\ldots,{ W}_{n})$ are called \textit{constrained multi-analytic operators}. We will use the symbol $\Theta$ (or $\Theta(B_1, \ldots, B_n)$ and $\Theta(W_1,\ldots, W_n)$ if the presence of $(B_1, \ldots, B_n)$ and $(W_1,\ldots, W_n)$, respectively, is not clear from the context) to denote the constrained multi-analytic operators in $\mathscr{W}(B_1, \ldots, B_n)$ and $\mathscr{W}(W_1, \ldots, W_n)$.

The following result furnishes an analogue of Theorem \ref{first-BLHT} in the setting of noncommutative varieties.

\begin{theorem}\label{BLHT-variety}
Let $\cle_*$ be a Hilbert space, $T \in \clb(N_J \otimes \cle_*)$, and let $\clm$ be a closed subspace of $N_J \otimes \cle_*$. Suppose $T (B_i \otimes I_{\cle_*}) = (B_i \otimes I_{\cle_*}) T$ for all $i=1, \ldots n$. The following statements are equivalent:
 \begin{itemize}
\item[(a)] $\clm$ is a submodule of $N_J \otimes \cle_*$ and $T \clm
\subseteq \clm$.
\item[(b)] There exist a closed subspace $\cle$ of $N_J \otimes \cle_*$, a
constrained multi-analytic partial isometry
\[
\Theta(W_1,\ldots,W_n) \in \mathscr W (W_1,\ldots,W_n)\overline \otimes \clb(\cle, \cle_*),
\]
and a constrained multi-analytic operator $\Phi(W_1,\ldots,W_n) \in \mathscr W(W_1,\ldots,W_n)\overline \otimes \clb(\cle)$ such that
\[
\clm = \Theta(W_1,\ldots,W_n) (N_J \otimes \cle),
\]
and
\[
T \Theta(W_1,\ldots,W_n) = \Theta(W_1,\ldots,W_n) \Phi(W_1,\ldots,W_n).
\]
\end{itemize}
\end{theorem}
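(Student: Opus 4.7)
The plan is to follow the strategy of Theorem \ref{first-BLHT}, with Popescu's constrained Beurling--Lax--Halmos theorem (Theorem \ref{popescu-proof}) replacing Theorem \ref{BLHT}. The main technical wrinkle is that Theorem \ref{popescu-proof} produces only a partial isometry $\Theta$, not an isometry, so some extra bookkeeping will be needed to verify that the natural candidate for $\Phi$ lies in $\mathscr W(W_1,\ldots,W_n)\overline\otimes\clb(\cle)$.

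For the direction (b) $\Rightarrow$ (a), I would argue directly: since $\Theta$ intertwines the constrained creation tuples on $N_J\otimes\cle$ and $N_J\otimes\cle_*$, its range $\clm = \Theta(N_J\otimes\cle)$ is automatically a submodule of $N_J\otimes\cle_*$, and
\[
T\clm = T\Theta(N_J\otimes\cle) = \Theta\Phi(N_J\otimes\cle) \subseteq \clm
\]
gives $T$-invariance.

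For (a) $\Rightarrow$ (b), the first step is to invoke Theorem \ref{popescu-proof} to obtain a Hilbert space $\cle$ and a partial isometry $\Theta \in \mathscr W(W_1,\ldots,W_n)\overline\otimes\clb(\cle,\cle_*)$ with $\clm = \Theta(N_J\otimes\cle)$. Writing $Q := \Theta^*\Theta$, so that $\Theta\Theta^* = P_{\clm}$, one defines
\[
\Phi := \Theta^* T \Theta \in \clb(N_J\otimes\cle).
\]
The required identity $T\Theta = \Theta\Phi$ is then immediate, since $\Theta\Phi = \Theta\Theta^* T\Theta = P_{\clm} T\Theta = T\Theta$ using $T\clm \subseteq \clm$. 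The substantive step is to show that $\Phi$ commutes with each $B_i\otimes I_{\cle}$, which by the commutant description of constrained multi-analytic operators will place $\Phi$ in $\mathscr W(W_1,\ldots,W_n)\overline\otimes\clb(\cle)$. My plan is first to verify that $Q$ itself commutes with $B_i\otimes I_{\cle}$: taking adjoints in $(B_i\otimes I_{\cle_*})\Theta = \Theta(B_i\otimes I_{\cle})$ gives $\Theta^*(B_i\otimes I_{\cle_*})^* = (B_i\otimes I_{\cle})^*\Theta^*$, and multiplication by $\Theta$ on the right yields
\[
Q(B_i\otimes I_{\cle})^* = \Theta^*(B_i\otimes I_{\cle_*})^*\Theta = (B_i\otimes I_{\cle})^* Q.
\]
Consequently $\ran(Q)$, the initial space of $\Theta$, is $(B_i\otimes I_{\cle})$-invariant, which quickly upgrades to the one-sided intertwining $\Theta^*(B_i\otimes I_{\cle_*}) v = (B_i\otimes I_{\cle})\Theta^* v$ for every $v \in \clm$. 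Applying this at $v = T\Theta f \in \clm$ together with the hypothesis $T(B_i\otimes I_{\cle_*}) = (B_i\otimes I_{\cle_*}) T$ then yields
\[
\Phi(B_i\otimes I_{\cle}) = \Theta^* T(B_i\otimes I_{\cle_*})\Theta = \Theta^*(B_i\otimes I_{\cle_*}) T\Theta = (B_i\otimes I_{\cle})\Theta^* T\Theta = (B_i\otimes I_{\cle})\Phi,
\]
as desired.

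The main obstacle, as indicated, is the partial-isometry nature of $\Theta$ (a manifestation of the non-uniqueness issue in constrained commutant lifting noted in the introduction). In the unconstrained setting of Theorem \ref{first-BLHT}, $\Theta$ is an isometry and the intertwining for $\Theta^*$ is automatic; here that intertwining only survives when restricted to the range $\clm$, and its derivation hinges on the identity $[Q,(B_i\otimes I_{\cle})^*] = 0$ established above.
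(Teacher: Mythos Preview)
Your argument has a genuine gap at the key step. From the intertwining relation $(B_i\otimes I_{\cle_*})\Theta = \Theta(B_i\otimes I_{\cle})$ and its adjoint $\Theta^*(B_i\otimes I_{\cle_*})^* = (B_i\otimes I_{\cle})^*\Theta^*$, multiplying by $\Theta$ on the right yields only the second equality in your displayed line, namely $\Theta^*(B_i\otimes I_{\cle_*})^*\Theta = (B_i\otimes I_{\cle})^* Q$. The first equality $Q(B_i\otimes I_{\cle})^* = \Theta^*(B_i\otimes I_{\cle_*})^*\Theta$ would require $\Theta(B_i\otimes I_{\cle})^* = (B_i\otimes I_{\cle_*})^*\Theta$, which is not available. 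In fact the intertwining relation only tells you that $\ker\Theta$ is $(B_i\otimes I_{\cle})$-invariant, equivalently that $\ran Q$ is $(B_i\otimes I_{\cle})^*$-invariant; it does \emph{not} give $(B_i\otimes I_{\cle})$-invariance of $\ran Q$. This is precisely what your ``one-sided intertwining'' needs: for $v=\Theta g\in\clm$ with $g\in\ran Q$ one computes $\Theta^*(B_i\otimes I_{\cle_*})v = Q(B_i\otimes I_{\cle})g$ while $(B_i\otimes I_{\cle})\Theta^* v = (B_i\otimes I_{\cle})g$, and these agree only if $(B_i\otimes I_{\cle})g\in\ran Q$. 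Consequently the candidate $\Phi=\Theta^*T\Theta$ need not commute with $B_i\otimes I_{\cle}$, so you cannot place it in $\mathscr W(W_1,\ldots,W_n)\overline\otimes\clb(\cle)$.

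The paper circumvents this obstacle by lifting everything to the full Fock space rather than working directly with the partial isometry on $N_J$. It lifts $T$ to $\Psi\in R_n^\infty\overline\otimes\clb(\cle_*)$ via commutant lifting, enlarges $\clm$ to $\clm_J=(M_J\otimes\cle_*)\oplus\clm\subseteq F^2_n\otimes\cle_*$, and applies Theorem~\ref{first-BLHT} there, where the resulting $\tilde\Theta$ is a genuine isometry and $\tilde\Phi\in R_n^\infty\overline\otimes\clb(\tilde\cle)$. The constrained $\Phi$ is then \emph{defined} as the compression $P_{N_J\otimes\cle}\tilde\Phi|_{N_J\otimes\cle}$, which lies in $\mathscr W(W_1,\ldots,W_n)\overline\otimes\clb(\cle)$ by \eqref{eq-W R n infty}; the identity $T\Theta=\Theta\Phi$ is then verified directly. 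A secondary point: the statement asserts that $\cle$ is a closed subspace of $N_J\otimes\cle_*$, which does not come for free from Theorem~\ref{popescu-proof}; in the paper this is obtained from the explicit wandering-subspace construction and proved separately in Lemma~\ref{lemma E subset NJ E}.
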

\begin{proof}
Again, $(b) \Rightarrow (a)$ follows from Douglas' range-inclusion theorem. We now prove that $(a) \Rightarrow (b)$. The idea is to apply Theorem \ref{first-BLHT} along with Popescu's non-commutative version of the commutant lifting theorem in an appropriate sense. Suppose $\clm$ is a submodule of $N_J \otimes \cle_*$ and $T \clm \subseteq \clm$. By the noncommutative commutant lifting theorem (see \cite[Theorem 3.2]{Po89-1}), there exists $\Psi \in R^{\infty}_n\overline{\otimes} \clb(\cle_*)$ such that
\beq\label{lifting-of-t}
\Psi^*|_{N_{_J}\otimes \cle_*} = T^*.
\eeq
Observe that
\[
(F^2_n \otimes \cle_*) \ominus (N_J \otimes \cle_*) = M_J \otimes \cle_*,
\]
is a submodule of $F^2_n \otimes \cle_*$. Define
\[
\clm_J = (M_J \otimes \cle_*) \oplus \clm.
\]
Clearly, $\clm_J \subseteq F^2_n \otimes \cle_*$. First, we claim that $\clm_J$ is $\Psi$-invariant. Indeed, on the one hand
\[
\Psi (M_J \otimes \cle_*) \subseteq (M_J \otimes \cle_*),
\]
as $\Psi^* (N_J \otimes \cle_*) \subseteq (N_J \otimes \cle_*)$ and, on the other hand, since $\clm \subseteq N_J \otimes \cle_*$, we have
\[
\begin{split}
\Psi P_{\clm} & = (P_{M_J \otimes \cle_*} + P_{N_J \otimes \cle_*}) \Psi P_{N_J \otimes \cle_*} P_{\clm}
\\
& = \Big(P_{M_J \otimes \cle_*} \Psi P_{N_J \otimes \cle_*} + P_{N_J \otimes \cle_*} \Psi P_{N_J \otimes \cle_*} \Big) P_{\clm}
\\
& = \Big(P_{M_J \otimes \cle_*} \Psi P_{N_J \otimes \cle_*} + T \Big) P_{\clm},
\end{split}
\]
and hence
\[
\Psi \clm \subseteq \clm_J.
\]
Next we claim that $\clm_J$ is a submodule of $F^2_n \otimes \cle_*$. The proof is similar to the proof of the above claim. Otherwise, one may argue, as in the first paragraph in \cite[page 397]{Po06}, that
\[
\clm_J = \Big(F^2_n \otimes \cle_*\Big) \ominus \Big((N_J \otimes \cle_*) \ominus \clm\Big),
\]
and since $(N_J \otimes \cle_*) \ominus \clm$ is invariant under $(B \otimes I_{\cle_*})^*$, it follows that it is also invariant under $(S \otimes I_{\cle_*})^*$, and hence $\clm_J$ is a submodule of $F^2_n \otimes \cle_*$.

\noindent On the other hand, for each $i=1, \ldots, n$, we have
\[
\begin{split}
P_{M_J \otimes \cle_*} (S_i \otimes I_{\cle_*})^*|_{\clm_J} & = P_{M_J \otimes \cle_*} (S_i \otimes I_{\cle_*})^*|_{M_J \otimes \cle_*} + P_{M_J \otimes \cle_*} (S_i \otimes I_{\cle_*})^*|_{\clm}
\\
& = P_{M_J \otimes \cle_*} (S_i \otimes I_{\cle_*})^*|_{M_J \otimes \cle_*},
\end{split}
\]
as
\[
(S_i \otimes I_{\cle_*})^* \clm \subseteq (S_i \otimes I_{\cle_*})^* (N_J \otimes \cle_*) \subseteq N_J \otimes \cle_*.
\]
This implies
\[
P_{\clm_J} (S_i \otimes I_{\cle_*})^*|_{\clm_J} = P_{M_J \otimes \cle_*} (S_i \otimes I_{\cle_*})^*|_{M_J \otimes \cle_*} + P_{\clm} (S_i \otimes I_{\cle_*})^*|_{\clm_J},
\]
and hence
\begin{equation}\label{eq-15.1}
\tilde{\cle} = \clw \oplus \mathop{\cap}_{i=1}^n \ker \Big(P_{\clm} (S_i \otimes I_{\cle_*})^*|_{\clm_J}\Big),
\end{equation}
where
\begin{equation}\label{eq-15.2}
\tilde{\cle} = \mathop{\cap}_{i=1}^n \ker \Big(P_{\clm_J} (S_i \otimes I_{\cle_*})^*|_{\clm_J}\Big),
\end{equation}
and
\begin{equation}\label{eq-15.3}
\clw = \mathop{\cap}_{i=1}^n \ker \Big(P_{M_J \otimes \cle_*} (S_i \otimes I_{\cle_*})^*|_{M_J \otimes \cle_*}\Big),
\end{equation}
are wandering subspaces of the tuples $(S \otimes I_{\cle_*})|_{\clm_J}$ and $(S \otimes I_{\cle_*})|_{M_J \otimes \cle_*}$ on $\clm_J$ and $M_J \otimes \cle_*$, respectively. We conclude therefore, by Theorem \ref{first-BLHT} along with the proof of Theorem \ref{BLHT} (and in particular, the construction of wandering subspace in \eqref{eq-ker SI}), that there exist an inner multi-analytic operator $\tilde{\Theta} \in R^{\infty}_n \overline{\otimes} \clb (\tilde{\cle}, \cle_*)$ and a multi-analytic operator $\tilde{\Phi} \in R^{\infty}_n\overline{\otimes} \clb(\tilde{\cle})$ such that
\[
\clm_J = \tilde{\Theta} (F^2_n \otimes \tilde{\cle}),
\]
and
\begin{equation}\label{eq-14.0}
M_J \otimes \cle_* = \tilde{\Theta}|_{F^2_n \otimes \clw} (F^2_n \otimes \clw),
\end{equation}
and
\[
\Psi \tilde{\Theta} = \tilde{\Theta} \tilde{\Phi},
\]
and the Fourier coefficients of $\tilde \Phi$ are given by
\begin{equation}\label{eq-14.1}
\tilde \vp_{\alpha^{t}} = P_{\tilde{\cle}} (S^{\alpha*}\otimes I_{\cle_*})\Psi|_{\tilde{\cle}},
\end{equation}
for all $\alpha \in F^+_n$. Now we set
\begin{equation}\label{eq-16.1}
\cle = \tilde{\cle} \ominus  \clw.
\end{equation}
Then
\[
\cle = \mathop{\cap}_{i=1}^n \ker \Big(P_{\clm} (S_i \otimes I_{\cle_*})^*|_{\clm_J}\Big) \subseteq \clm_J \subseteq F^2_n \otimes \cle_*.
\]
Next we define
\[
\Theta \in \mathscr W (W_1,\ldots,W_n)\overline \otimes B(\cle, \cle_*) \quad \mbox{and} \quad \Phi \in \mathscr W(W_1,\ldots,W_n)\otimes \mathcal  B(\mathcal E),
\]
by
\[
\Theta = P_{N_{J} \otimes \cle_*} \tilde{\Theta}|_{N_{J} \otimes \cle},
\]
and
\begin{equation}\label{def-Phi tilde Phi}
\Phi = P_{N_{J} \otimes \cle} \tilde{\Phi}|_{N_{J} \otimes \cle},
\end{equation}
respectively. Now taking into account of the fact that $N_J \otimes \cle_*$ is jointly co-invariant under $(R \otimes I_{\cle_*})$ \cite[Lemma 1.1]{Po06} we have
\[
\tilde{\Theta} ^* (N_{J}\otimes \cle_*) \subseteq N_{J} \otimes \cle,
\]
or equivalently
\begin{equation}\label{eq-17.1}
\tilde{\Theta}^* P_{N_{J}\otimes \cle_*}  = P_{N_{J} \otimes \cle} \tilde{\Theta}^* P_{N_{J}\otimes \cle_*}.
\end{equation}
Note also that
\[
\Psi^*(N_J \otimes \cle_*) \subseteq N_J \otimes \cle_*,
\]
as $\Psi^*|_{N_J \otimes \cle_*} = T^* \in \clb(N_J \otimes \cle_*)$. Using these observations and the fact that $\tilde{\Theta} \tilde{\Phi}=  \Psi \tilde{\Theta}$ we compute
\[
\begin{split}
P_{N_{J} \otimes \cle} \tilde{\Phi}^* \tilde{\Theta}^*|_{N_{J} \otimes \cle_*} & = P_{N_{J} \otimes \cle} \tilde{\Theta}^* \Psi^*|_{N_{J} \otimes \cle_*}
\\
& = P_{N_{J} \otimes \cle} \tilde{\Theta}^*|_{N_{J} \otimes \cle_*} \Psi^*|_{N_{J} \otimes \cle_*}
\\
& = \Theta^* T^*,
\end{split}
\]
which implies
\[
\begin{split}
T \Theta & = P_{N_{J} \otimes \cle_*} \tilde{\Theta} \tilde{\Phi}|_{N_{J} \otimes \cle}
\\
& = P_{N_{J} \otimes \cle_*} \tilde{\Theta} I_{F^2_n \otimes \tilde{\cle}} \tilde{\Phi}|_{N_{J} \otimes \cle}
\\
& = P_{N_{J} \otimes \cle_*} \tilde{\Theta} (P_{F^2_n \otimes \cle} + P_{F^2_n \otimes \clw}) \tilde{\Phi}|_{N_{J} \otimes \cle}.
\end{split}
\]
By \eqref{eq-14.0}, we must have
\[
\tilde{\Theta}(F^2_n\otimes \clw) = M_J \otimes \cle_* \perp N_J \otimes \cle_*,
\]
and hence
\[
P_{N_{J} \otimes \cle_*} \tilde{\Theta} P_{F^2_n \otimes \clw} = 0.
\]
We obtain
\[
T \Theta = P_{N_{J} \otimes \cle_*} \tilde{\Theta} P_{F^2_n \otimes \cle} \tilde{\Phi}|_{N_{J} \otimes \cle}
\]
By \eqref{eq-17.1}, the later implies that
\[
\begin{split}
T \Theta & = P_{N_{J} \otimes \cle_*} \tilde{\Theta} P_{N_{J} \otimes \cle} P_{F^2_n \otimes \cle} \tilde{\Phi}|_{N_{J} \otimes \cle}
\\
& = P_{N_{J} \otimes \cle_*} \tilde{\Theta} P_{N_{J} \otimes \cle} \tilde{\Phi}|_{N_{J} \otimes \cle},
\end{split}
\]
and hence $T \Theta = \Theta \Phi$. Finally, we use \eqref{eq-17.1} again to obtain
\[
\Theta \Theta^* = P_{N_{J} \otimes \cle} \tilde{\Theta} \tilde{\Theta}^*|_{N_{J} \otimes \cle} = P_{N_{J} \otimes \cle} P_{\clm_J}|_{N_{J} \otimes \cle} = P_{\clm}.
\]
Therefore, $\Theta$ is a partial isometry and $\clm = \mbox{ran}\Theta$. We postpone the proof of the fact that $\cle \subseteq N_J \otimes \cle_*$ till Lemma \ref{lemma E subset NJ E}. This completes the proof of the theorem.
\end{proof}

The submodule part of the above theorem reproves Popescu's version of constrained Beurling, Lax and Halmos theorem, namely Theorem \ref{popescu-proof} (or see \cite[Popescu, Theorem 1.2]{Po06}). Here, however, the present proof brings out more geometric flavour (like the fact that $\cle \subseteq N_J \otimes \cle_*$). This will be more evident in the following section.

Moreover, we will return to the constructions of $\tilde{\cle}$ and $\clw$ as in equations \eqref{eq-15.1}, \eqref{eq-15.2} and \eqref{eq-15.3} and the decomposition $\tilde{\cle} = \cle \oplus \clw$ as in \eqref{eq-16.1} in Section \ref{section-DA modules} when we discuss in more detail about the representation of the constrained multi-analytic map $\Phi$.

Now we introduce constrained Fock $\n$-modules (quotient modules of Fock $\n$-modules).

\begin{definition}
Given $\n = (n_1, \ldots, n_k) \in \mathbb{N}^k$ and weak operator topology closed two-sided proper ideal $J_i$ in $F_{n_i}^{\infty}$, $i = 1, \ldots, k$, the constrained Fock $\n$-module $N_{\J_{\n}}$ is defined by
\[
N_{\J_{\n}}:= N_{J_1} \otimes \cdots \otimes N_{J_k}.
\]
\end{definition}

Since $N_{\J_{\n}} \subseteq F^2_{\n}$, the following operators on $N_{\J_{\n}}$ are well defined:
\[
\BB_{ij} = P_{N_{\J_{\n}}} \BS_{ij}|_{N_{\J_{\n}}} \quad \mbox{and} \quad \BW_{ij} = P_{N_{\J_{\n}}} \BR_{ij}|_{N_{\J_{\n}}},
\]
for all $i=1, \ldots, k$ and $j=1, \ldots, n_i$. Set $\BB_{n_i} = (\BB_{i1}, \ldots, \BB_{i n_i})$, the $n_i$-tuple on $N_{J_{\n}}$, and let $\BB = (\BB_{n_1}, \ldots, \BB_{n_k})$. Similarly, define $\BW_{n_i}$ and $\BW$ on $N_{\J_{\n}}$. Observe that
\[
{\bf B}_{i,j} = I_{N_{J_1}} \otimes \cdots \otimes B_{ij}\otimes\cdots\otimes I_{N_{J_k}},
\] and
\[
{\bf W}_{ij} = I_{N_{J_1}} \otimes \cdots \otimes  W_{i,j}\otimes\cdots\otimes I_{N_{J_k}},
\]
for all $i=1, \ldots, k$ and $j=1, \ldots, n_i$. Moreover, if $1 \leq  p < q \leq k$ and $X \in \BB_p$ and $Y \in \BB_q$, then $X Y^* = Y^* X$, that is, $\BB_p$ \textit{doubly commutes} with $\BB_{q}$.

Clearly, $N_{\J_{\n}}$ is a quotient module of the $\C\langle \bm{Z} \rangle_{\n}$-Hilbert module $F^2_{\n}$. From this point of view, a closed subspace $\clm \subseteq N_{\J_{\n}}$ is said to be a \textit{submodule} if
\[
X \clm \subseteq \clm,
\]
for all $X \in \BB_{n_i}$, $i=1, \ldots, k$. The proof of the following corollary concerning submodules of $N_{\J_{\n}}$ is now similar to that of Corollary \ref{second-BLHT}.

\begin{corollary}\label{constrained-submodule}
Let $J_i$ be a weak operator topology closed two-sided proper ideal in $F^\infty_{n_i}$, $i=1, \ldots, k$, and let $\clm$ be a closed subspace of the constrained Fock $\n$-module $N_{\J_{\n}} = N_{J_1} \otimes \cdots \otimes N_{J_k}$. Suppose
\[
\cle_{\n} = N_{J_2} \otimes \cdots \otimes N_{J_k}.
\]
Then $\clm$ is a submodule of the quotient module $N_{\J_{\n}}$ if and only if there exist a Hilbert space $\cle$, a constrained multi-analytic partial isometry $\Theta \in \mathscr W({\bf W}_{11},\ldots,{\bf W}_{1n_1}) \overline \otimes \clb(\cle, \cle_{\n})$, and a constrained multi-analytic operator $\Phi_{ij} \in \mathscr W ({\bf W}_{11},\ldots,{\bf W}_{1 n_1}) \overline{\otimes} \clb(\cle)$ such that
\beqn
\mathcal M = \Theta (N_{J_1} \otimes \mathcal E)
\eeqn
and
\beqn
{\bf B}_{ij} \Theta = \Theta \Phi_{ij},
\eeqn
for all $i=2, \ldots, k$ and  $j=1, \ldots, n_i$.
\end{corollary}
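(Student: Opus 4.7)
The plan is to derive Corollary \ref{constrained-submodule} from Theorem \ref{BLHT-variety} in exactly the same manner that Corollary \ref{second-BLHT} was derived from Theorem \ref{first-BLHT}. Identify the constrained Fock $\n$-module with
\[
N_{\J_{\n}} = N_{J_1} \otimes \cle_{\n}, \qquad \cle_{\n} = N_{J_2} \otimes \cdots \otimes N_{J_k},
\]
so that $\BB_{1j} = B_{1j} \otimes I_{\cle_{\n}}$ for $j = 1, \ldots, n_1$, while for each $i \in \{2, \ldots, k\}$ and $j \in \{1, \ldots, n_i\}$ the operator $\BB_{ij}$ acts as $I_{N_{J_1}} \otimes B_{ij}'$ for a suitable operator $B_{ij}'$ on $\cle_{\n}$ and hence commutes (in fact doubly commutes) with every $B_{1l} \otimes I_{\cle_{\n}}$.

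For the forward direction, assume $\clm$ is a submodule of $N_{\J_{\n}}$. Then $\clm$ is jointly invariant under $\{B_{1l} \otimes I_{\cle_{\n}}\}_{l=1}^{n_1}$ (i.e., it is a submodule in the sense of Theorem \ref{BLHT-variety} with $J = J_1$ and $\cle_* = \cle_{\n}$), and it is also $T$-invariant for each $T = \BB_{ij}$ with $i \geq 2$ and $j \in \{1, \ldots, n_i\}$. Applying Theorem \ref{BLHT-variety} for each such $T$ separately produces a Hilbert space $\cle$, a constrained multi-analytic partial isometry $\Theta \in \mathscr W({\bf W}_{11}, \ldots, {\bf W}_{1 n_1}) \overline{\otimes} \clb(\cle, \cle_{\n})$ with $\clm = \Theta(N_{J_1} \otimes \cle)$, and a constrained multi-analytic operator $\Phi_{ij} \in \mathscr W({\bf W}_{11}, \ldots, {\bf W}_{1 n_1}) \overline{\otimes} \clb(\cle)$ satisfying $\BB_{ij} \Theta = \Theta \Phi_{ij}$. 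The essential observation is that in the proof of Theorem \ref{BLHT-variety} the pair $(\cle, \Theta)$ is extracted purely from the wandering-subspace structure of $\clm_{J_1} = (M_{J_1} \otimes \cle_{\n}) \oplus \clm$ with respect to the creation tuple on $F^2_{n_1}$ (see equations \eqref{eq-15.1}--\eqref{eq-16.1}), and in particular does not depend on $T$; hence one and the same $(\cle, \Theta)$ serves every $(i,j)$ with $i \geq 2$, while each $\Phi_{ij}$ is the operator produced by the theorem for the corresponding $T = \BB_{ij}$.

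The converse is the easy half. Given $\Theta$ and the family $\{\Phi_{ij}\}$ as in the statement, the intertwining property built into the constrained multi-analyticity of $\Theta$ yields $\BB_{1j} \clm \subseteq \clm$ for $j = 1, \ldots, n_1$ directly, while $\BB_{ij} \Theta = \Theta \Phi_{ij}$ gives $\BB_{ij} \clm = \Theta \Phi_{ij}(N_{J_1} \otimes \cle) \subseteq \Theta(N_{J_1} \otimes \cle) = \clm$ for $i \geq 2$, via Douglas' range-inclusion theorem as in the $(b) \Rightarrow (a)$ step of Theorem \ref{first-BLHT}. The only substantive technical point in the whole argument is the commonality of $(\cle, \Theta)$ across the different applications of Theorem \ref{BLHT-variety}, and this is furnished automatically by the construction in that proof; everything else is formal.
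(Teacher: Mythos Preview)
Your proposal is correct and follows exactly the approach the paper indicates: apply Theorem \ref{BLHT-variety} with $J = J_1$, $\cle_* = \cle_{\n}$, and $T = \BB_{ij}$ for each $i \geq 2$, noting that the pair $(\cle, \Theta)$ constructed in the proof of Theorem \ref{BLHT-variety} depends only on $\clm$ (via the wandering subspace of $\clm_{J_1}$), not on $T$. Your explicit remark that $(\cle,\Theta)$ is common to all $(i,j)$ is the one point the paper leaves implicit, and you have handled it correctly.
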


Note, by the way, that the constrained multi-analytic operators
\[
\Phi(W_1, \ldots, W_n) \in \mathscr W(W_1,\ldots,W_n)\overline \otimes \clb(\cle),
\]
in Theorem \ref{BLHT-variety} and
\[
\Phi_{ij} \in \mathscr W ({\bf W}_{11},\ldots,{\bf W}_{1 n_1}) \overline{\otimes} B(\cle),
\]
in Corollary \ref{constrained-submodule} are not canonical. This inconvenience is caused by the fact that $\Psi$ in the proof of Theorem \ref{BLHT-variety} is a lifting of the map $T$, and hence the choice of $\Phi$ is not uniquely determined, in general, by $T$. For now we will leave them alone and take up this issue again in the next section.

\section{Drury Arveson $\n$-modules and the map $\Phi$}\label{section-DA modules}

In this section we continue our discussion of constrained Fock $\n$-modules by looking at special noncommutative varieties. More specifically, here we aim at representing submodules of Drury Arveson $\n$-modules (see the definition of Drury Arveson $\n$-modules below). Moreover, we will analyze the constrained multi-analytic map $\Phi$ of Corollary \ref{constrained-submodule} and see that the representations of $\Phi$ for submodules of Drury Arveson $\n$-modules are more concrete and informative.

We first recall the construction of the Drury-Arveson module. Consider the Fock module $F^2_n$ and let $J$ denote the weakly closed two sided ideal generated by
\begin{equation}\label{eq-ideal J}
\{S_p S_q - S_q S_p: p, q = 1, \ldots, n\} \subseteq F_n^\infty.
\end{equation}
Then the quotient space $N_{J}$ is the symmetric Fock space, $P_{N_{J}} S_i|_{N_J} = P_{N_{J}} W_i|_{N_J}$ for all $i=1, \ldots, n$, and $(P_{N_{J}} S_1|_{N_{J}}, \ldots, P_{N_{J}} S_{n}|_{N_J})$ on $N_J$ and the tuple of multiplication operators $(M_{z_1}, \ldots, M_{z_{n}})$ on the Drury-Arveson space $H^2_n$ are unitarlity equivalent (we will often use this fact implicitly). Moreover
\[
P_{N_{J}} F^{\infty}_{n} |_{P_{N_{J}}} = \mathscr M(H^2_{n}),
\]
where $\mathscr M(H^2_{n})$ denotes the set of multipliers of $H^2_n$ (see \cite{Ar} and \cite{Po06}). Recall also that $H^2_n$ is a reproducing kernel Hilbert space corresponding to the kernel function
\[
K(\z, \w) = (1 - \sum_{i=1}^n z_i \bar{w}_i)^{-1} \quad \quad (\z, \w \in \mathbb{B}^n),
\]
where $\mathbb{B}^n$ denotes the open unit ball in $\C^n$. Moreover, $\mathscr M(H^2_{n})$ is a commutative Banach algebra and is given by
\[
\mathscr M(H^2_{n}) = \{\vp \in \mbox{Hol}(\mathbb{B}^n): \vp H^2_n \subseteq H^2_n\}.
\]
We now consider the Fock $\n$-module $F_{\n}^2 = F^2_{n_1} \otimes \cdots \otimes F^2_{n_k}$. Let $J_i \subseteq F^\infty_{n_i}$ denote the weakly closed two sided ideal generated by the commutators of the creation operators on $F^2_{n_i}$, $i=1, \ldots, k$, as in \eqref{eq-ideal J}. Then the corresponding constrained Fock $\n$-module $N_{\J_{\n}}$, also denoted by $H^2_{\n}$, is the tensor product of Drury-Arveson modules $\{H^2_{n_i}\}_{i=1}^k$, that is
\[
H^2_{\n} = H^2_{n_1} \otimes \cdots \otimes H^2_{n_k}.
\]
We call $H^2_{\n}$ the \textit{Drury-Arveson $\n$-module}. Clearly
\[
\BB_{ij} = \BW_{ij},
\]
and, up to unitarily equivalence, they are equal to
\[
\M_{z_{ij}} = I_{H^2_{n_1}} \otimes\cdots \otimes M_{z_{ij}} \otimes\cdots\otimes I_{H^2_{n_k}},
\]
on $H^2_{\n}$, where $i=1, \ldots, k$ and $j=1, \ldots, n_i$. Also, for simplicity of notation let
\[
\M_{n_i} := (\M_{z_{i 1}}, \ldots , \M_{z_{i n_i}}) \quad \quad (i=1, \ldots, k).
\]
Corollary \ref{constrained-submodule}, in the setting of Drury-Arveson $\n$-module, then yields the following:

\begin{corollary}\label{corollary-DA cor 1}
Let $\n=(n_1,\ldots,n_k) \in \mathbb N^k$, and let $\clm$ be a closed subspace of the Drury-Arveson $\n$-module
\[
H^2_{\n} = H^2_{n_1} \otimes H^2_{n_2} \otimes \cdots \otimes  H^2_{n_k}.
\]
Suppose
\[
\cle_{\n} = H^2_{n_2} \otimes \cdots \otimes  H^2_{n_k}.
\]
Then $\clm$ is a submodule of $H^2_{\n}$ if and only if there exist a Hilbert space $\cle$, a partial isometry $\Theta \in \mathscr M(H^2_{n_1}) \overline \otimes \clb(\cle, \cle_{\n})$, and $\Phi_{ij} \in \mathscr M (H^2_{n_1}) \overline{\otimes} \clb(\cle)$ such that
\beqn
\mathcal M = \Theta (H^2_{n_1} \otimes \mathcal E)
\eeqn
and
\beqn
\M_{z_{ij}} \Theta = \Theta \Phi_{ij},
\eeqn
for all $i=2, \ldots, k$ and  $j=1, \ldots, n_i$.
\end{corollary}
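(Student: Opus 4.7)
The plan is to obtain this as a direct specialization of Corollary \ref{constrained-submodule} to the noncommutative varieties $J_i \subseteq F^\infty_{n_i}$ generated by the commutators of the creation operators, as in \eqref{eq-ideal J}. The entire content of the corollary is then a translation of the constrained Fock $\n$-module language into reproducing kernel language for the Drury-Arveson setting.

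First I would record the identifications that make this translation possible. For each $i = 1, \ldots, k$, when $J_i$ is the commutator ideal, the constrained Fock module $N_{J_i}$ is the symmetric Fock space and is unitarily equivalent to $H^2_{n_i}$, and moreover under this identification the constrained left and right creation operators coincide with the coordinate multipliers: $B_{ij} = W_{ij} = M_{z_{ij}}$ for all $j = 1, \ldots, n_i$. Consequently, the constrained Fock $\n$-module $N_{\J_{\n}} = N_{J_1} \otimes \cdots \otimes N_{J_k}$ is exactly the Drury-Arveson $\n$-module $H^2_{\n}$, and each $\BB_{ij}$ and $\BW_{ij}$ becomes $\M_{z_{ij}}$, as already noted in the paragraph preceding the corollary. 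In particular, the notion of submodule of $N_{\J_{\n}}$ becomes precisely the notion of submodule of $H^2_{\n}$ appearing in the statement.

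Next I would translate the algebras of multi-analytic maps. Using the identification
\[
\mathscr W(W_1, \ldots, W_{n_1}) = P_{N_J} F^{\infty}_{n_1}|_{N_J} = \mathscr M(H^2_{n_1}),
\]
together with the tensor product identity \eqref{eq-W R n infty}, the spaces
\[
\mathscr W(\BW_{11}, \ldots, \BW_{1 n_1}) \overline{\otimes} \clb(\cle, \cle_{\n}) \quad \text{and} \quad \mathscr W(\BW_{11}, \ldots, \BW_{1 n_1}) \overline{\otimes} \clb(\cle)
\]
translate, under the unitary equivalence above, to $\mathscr M(H^2_{n_1}) \overline{\otimes} \clb(\cle, \cle_{\n})$ and $\mathscr M(H^2_{n_1}) \overline{\otimes} \clb(\cle)$, respectively. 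Under the same identification, $\cle_{\n} = N_{J_2} \otimes \cdots \otimes N_{J_k}$ is exactly $H^2_{n_2} \otimes \cdots \otimes H^2_{n_k}$.

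Applying Corollary \ref{constrained-submodule} with these identifications gives: $\clm$ is a submodule of $H^2_{\n}$ iff there exist a Hilbert space $\cle$, a partial isometry $\Theta \in \mathscr M(H^2_{n_1}) \overline{\otimes} \clb(\cle, \cle_{\n})$, and $\Phi_{ij} \in \mathscr M(H^2_{n_1}) \overline{\otimes} \clb(\cle)$ with $\clm = \Theta(H^2_{n_1} \otimes \cle)$ and $\M_{z_{ij}} \Theta = \Theta \Phi_{ij}$ for $i = 2, \ldots, k$, $j = 1, \ldots, n_i$, which is the statement to be proved. Since this is purely a dictionary translation of Corollary \ref{constrained-submodule}, no genuine obstacle arises beyond verifying the identifications; the underlying analytic content is already embedded in the lifting argument of Theorem \ref{BLHT-variety}.
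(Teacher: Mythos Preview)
Your proposal is correct and follows exactly the paper's approach: the corollary is obtained by specializing Corollary \ref{constrained-submodule} to the commutator ideals $J_i$ and invoking the standard identifications $N_{J_i} \cong H^2_{n_i}$, $\BB_{ij} = \BW_{ij} = \M_{z_{ij}}$, and $\mathscr W(W_1,\ldots,W_{n_1}) = \mathscr M(H^2_{n_1})$ recorded just before the statement. The paper itself gives no separate proof beyond the sentence ``Corollary \ref{constrained-submodule}, in the setting of Drury-Arveson $\n$-module, then yields the following,'' so your translation is precisely what is intended.
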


As promised, we now return to address the representation of the constrained multi-analytic operator $\Phi(W_1,\ldots,W_n) \in \mathscr W(W_1,\ldots,W_n)\overline \otimes \clb(\cle)$ in Theorem \ref{BLHT-variety}. And, at the end of this section, in Corollary \ref{corollary-DA cor 2}, we will settle the issue of analytic representations of $\Phi_{ij} \in \mathscr M (H^2_{n_1}) \overline{\otimes} \clb(\cle)$. Here we will work with the same orthogonal decomposition
\[
\tilde{\cle} = \cle \oplus \clw,
\]
as in \eqref{eq-16.1} in the proof of Theorem \ref{BLHT-variety}. Here
\[
\tilde{\cle} = \bigcap_{i=1}^n \Big(\ker (P_{\clm_J} (S_i \otimes I_{\cle_*})^*|_{\clm_J})\Big),
\]
and
\[
\cle =  \bigcap_{i=1}^n \Big(\ker (P_{\clm} (S_i \otimes I_{\cle_*})^*|_{\clm_J}) \Big) \quad \mbox{and} \quad \clw = \bigcap_{i=1}^n \Big(\ker (P_{M_J \otimes \cle_*} (S_i \otimes I_{\cle_*})^*|_{M_J \otimes \cle_*})\Big),
\]
are also as in \eqref{eq-15.1}, \eqref{eq-15.2} and \eqref{eq-15.3}. In this setting, we have the following lemma which seems to be of independent interest:

\begin{lemma}\label{lemma E subset NJ E}
$\cle  \subseteq N_{J} \otimes \cle_{*}$.
\end{lemma}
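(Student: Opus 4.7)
The plan is to establish the dual statement $\cle \perp (M_J \otimes \cle_*)$, which, since $F^2_n \otimes \cle_* = (M_J \otimes \cle_*) \oplus (N_J \otimes \cle_*)$ and $\cle \subseteq F^2_n \otimes \cle_*$, is equivalent to the desired conclusion $\cle \subseteq N_J \otimes \cle_*$.

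First I would invoke Lemma \ref{pure-smodules} on the restriction $(S \otimes I_{\cle_*})|_{M_J \otimes \cle_*}$, which is a pure isometric tuple by Lemma \ref{pure-contractive-sub-module}. Its wandering subspace is precisely $\clw$ from \eqref{eq-15.3}, so
\[
M_J \otimes \cle_* = \bigoplus_{\beta \in F_n^+} (S \otimes I_{\cle_*})^\beta \clw.
\]
It therefore suffices to verify $\cle \perp (S \otimes I_{\cle_*})^\beta \clw$ for every $\beta \in F_n^+$.

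The case $\beta = e$ is immediate from the identity $\cle = \tilde{\cle} \ominus \clw$ recorded in \eqref{eq-16.1}. For $|\beta| \geq 1$, I would factor $\beta = g_i \gamma$ with $\gamma \in F_n^+$ and shift one creation operator across the inner product: for $x \in \cle$ and $w \in \clw$,
\[
\langle x, (S \otimes I_{\cle_*})^\beta w \rangle = \langle (S_i \otimes I_{\cle_*})^* x, (S \otimes I_{\cle_*})^\gamma w \rangle.
\]
Now $x \in \cle \subseteq \tilde{\cle}$, so by the defining condition \eqref{eq-15.2}, the vector $(S_i \otimes I_{\cle_*})^* x$ is orthogonal to $\clm_J$; on the other hand, $(S \otimes I_{\cle_*})^\gamma w$ lies in $M_J \otimes \cle_* \subseteq \clm_J$ because $M_J \otimes \cle_*$ is $(S \otimes I_{\cle_*})$-invariant and $w \in \clw \subseteq M_J \otimes \cle_*$. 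Hence the pairing vanishes.

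I do not anticipate a serious technical obstacle here; the whole force of the argument lies in the set-up $\clm_J = (M_J \otimes \cle_*) \oplus \clm$ that places $M_J \otimes \cle_*$ inside $\clm_J$, so that the wandering condition defining $\tilde{\cle}$ automatically sends $(S_i \otimes I_{\cle_*})^* x$ into the orthogonal complement of every generator $(S \otimes I_{\cle_*})^\gamma \clw$ of $M_J \otimes \cle_*$. The one point requiring attention is justifying the wandering decomposition of $M_J \otimes \cle_*$, which is precisely where Lemma \ref{pure-smodules} together with Lemma \ref{pure-contractive-sub-module} is applied.
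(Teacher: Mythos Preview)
Your argument is correct and follows essentially the same route as the paper's proof: both hinge on the observation that $\cle \perp \clw$ by definition and that $\tilde{\cle}$, being wandering for $(S\otimes I_{\cle_*})|_{\clm_J}$, is orthogonal to every $(S_i\otimes I_{\cle_*})$-shift of $M_J\otimes\cle_*\subseteq\clm_J$. The only cosmetic difference is that you invoke the full Wold decomposition of $M_J\otimes\cle_*$ via Lemma~\ref{pure-smodules}, whereas the paper uses a single wandering step and an equivalent lattice computation to reach the slightly sharper identity $\cle=\tilde{\cle}\cap(N_J\otimes\cle_*)$.
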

\begin{proof}
In what follows, $\bigvee$ denote the closed linear span in respective spaces. We observe that
\[
\cle = \tilde \cle \ominus \clw = \tilde{\cle} \cap \clw^{\perp}.
\]
Now we compute
\[
\begin{split}
\clw & = \bigcap_{i=1}^n \Big(\ker (P_{M_J \otimes \cle_*} (S_i \otimes I_{\cle_*})^*|_{M_J \otimes \cle_*})\Big)
\\
& = (M_{J} \otimes \cle_{*}) \bigcap \Big(\bigvee
_{i=1}^n \ran ((S_i \otimes I_{\cle_*})|_{M_J \otimes \cle_*})\Big)^{\perp}
\\
& = (N_{J} \otimes \cle_{*})^{\perp} \bigcap \Big(\bigvee
_{i=1}^n \ran ((S_i \otimes I_{\cle_*})|_{M_J \otimes \cle_*})\Big)^{\perp}
\\
& = \Big((N_{J} \otimes \cle_{*}) \bigvee \Big(\bigvee_{i=1}^n \ran ((S_i \otimes I_{\cle_*})|_{M_J
\otimes \cle_*})\Big)\Big)^{\perp},
\end{split}
\]
which implies
\[
\clw^{\perp} = (N_{J} \otimes \cle_{*}) \bigvee \Big(\bigvee_{i=1}^n \ran ((S_i \otimes I_{\cle_*})|_{M_J \otimes \cle_*})\Big).
\]
On the other hand, since $M_J \otimes \cle_*$ is a submodule of $F^2 \otimes \cle_*$ we have
\[
(N_{J} \otimes \cle_{*}) \perp \Big(\bigvee_{i=1}^n (\ran (S_i \otimes I_{\cle_*})|_{M_J
\otimes \cle_*})\Big),
\]
and hence
\[
\clw^{\perp} = (N_{J} \otimes \cle_{*}) \bigoplus \Big(\bigvee_{i=1}^n \ran ((S_i \otimes I_{\cle_*})|_{M_J \otimes \cle_*})\Big).
\]
Next we simplify $\tilde{\cle}$. Observe that
\[
\tilde{\cle} = \bigcap_{i=1}^n \Big(\ker (P_{\clm_J} (S_i \otimes I_{\cle_*})^*|_{\clm_J})\Big) = \clm_{J} \bigcap \Big(\bigvee_{i=1}^n \ran((S_i \otimes I_{\cle_*})|_{\clm_J})\Big)^{\perp}.
\]
In particular, $\tilde{\cle} \subseteq \clm_{J}$ and
\[
\tilde{\cle} \perp \Big(\bigvee_{i=1}^n \ran((S_i \otimes I_{\cle_*})|_{\clm_J})\Big).
\]
Also since $ M_{J} \otimes \cle_{*} \subseteq \clm_J$ we have
\[
\Big(\bigvee_{i=1}^n \ran((S_i \otimes I_{\cle_*})|_{M_J \otimes \cle_*})\Big) \subseteq \Big(\bigvee_{i=1}^n \ran((S_i \otimes I_{\cle_*})|_{\clm_J})\Big),
\]
and hence
\[
\Big(\bigvee_{i=1}^n \ran((S_i \otimes I_{\cle_*})|_{\clm_J})\Big)^{\perp} \subseteq \Big(\bigvee_{i=1}^n \ran((S_i \otimes I_{\cle_*})|_{M_J \otimes \cle_*})\Big)^{\perp}.
\]
This immediately leads to
\[
\tilde{\cle} \perp \Big(\bigvee_{i=1}^n \ran((S_i \otimes I_{\cle_*})|_{M_J \otimes \cle_*})\Big),
\]
and hence, finally
\[
\cle = \tilde{\cle} \cap \clw^{\perp} = \tilde{\cle} \bigcap \Big((N_{J} \otimes \cle_{*}) \bigoplus \Big(\bigvee_{i=1}^n \ran ((S_i \otimes I_{\cle_*})|_{M_J \otimes \cle_*})\Big)\Big) = \tilde{\cle} \bigcap (N_{J} \otimes \cle_*),
\]
that is $\cle \subseteq N_{J} \otimes \cle_*$. This completes the proof.
\end{proof}

Recall from \eqref{eq-W R n infty} that if $\Phi(W_1,\ldots,W_n) \in \mathscr W(W_1,\ldots,W_n)\overline \otimes \clb(\cle)$ is a constrained multi-analytic operator, then there exists (not necessarily unique)
\[
\tilde{\Phi}(R_1, \ldots, R_n) \in R_n^\infty \overline \otimes \clb(\cle),
\]
such that
\[
\Phi = P_{N_J \otimes \cle} \tilde{\Phi}|_{N_J \otimes \cle}.
\]
Evidently being a solution of the commutant lifting, the multi-analytic operator $\tilde{\Phi}$ is not unique and hence any possible definition of Fourier coefficients of $\Phi$ will be ambiguous (for instance, see \eqref{eq-general vp} below). However, as we shall see soon, for constrained Fock $\n$-modules the situation is somewhat favourable.

First we turn to constrained multi-analytic operator $\Phi$ in Theorem \ref{BLHT-variety}. Here 
\[
\Phi = P_{N_J \otimes \cle} \tilde{\Phi}|_{N_J \otimes \cle},
\]
and $\tilde{\Phi} \in R_n^\infty \overline{\otimes} \clb(\cle)$ (see \eqref{def-Phi tilde Phi}). We note that by Theorem \ref{first-BLHT}, the Fourier coefficients of $\tilde{\Phi}$, as constructed in the proof of Theorem \ref{BLHT-variety} (and also see \eqref{eq-14.1}) are given by
\[
\tilde \vp_{\alpha^{t}} = P_{\tilde{\cle}} (S \otimes I_{\cle_*})^{\alpha*} \Psi|_{\tilde{\cle}} \quad \quad (\alpha \in F^+_{n}).
\]
In this case, we define the \textit{Fourier coefficients of the constrained multi-analytic operator} $\Phi \in \mathscr W(W_1,\ldots,W_n)\overline \otimes \clb(\cle)$ corresponding to $\tilde{\Phi}$ as
\[
\vp_{\alpha^{t}}:= P_{\cle} \tilde{\phi}_{\alpha^{t}}|_{\cle} =  P_{\cle} (S \otimes I_{\cle_*})^{\alpha*} \Psi|_{\cle} \quad \quad (\alpha \in F^+_{n}).
\]
We proceed now to describe the Fourier coefficients $\vp_{\alpha^{t}}$ in detail as follows. Since, $\Psi^*|_{N_J \otimes \cle_*} = T^*$ by \eqref{lifting-of-t}, and $\cle  \subseteq N_{J} \otimes \cle_{*}$ by Lemma \ref{lemma E subset NJ E}, it follows that
\[
\Psi|_{\cle} = P_{N_J \otimes \cle_*} \Psi|_{\cle} + P_{M_J \otimes \cle_*} \Psi|_{\cle} = T|_{\cle} + P_{M_J \otimes \cle_*} \Psi|_{\cle},
\]
and hence
\begin{equation}\label{eq-general vp}
\vp_{\alpha^t} =  P_{{\cle}}(S \otimes I_{\cle_*})^{\alpha*} T|_{\cle} + P_{\cle} (S \otimes I_{\cle_*})^{\alpha*} P_{M_J \otimes \cle_*}\Psi|_{\cle},
\end{equation}
for all $\alpha \in F^+_{n}$. Here note that the appearance of $P_{M_J \otimes \cle_*}\Psi$ (here $\Psi$ is a lifting of $T$ as in \eqref{lifting-of-t}) is not so convenient.

We obviate the above inconvenience by restricting $T$ to tensor product of operators. So we now move to the setting of Corollary \ref{constrained-submodule}. Here, we treat $N_J$ as $N_{J_1}$, $\cle_*$ as
\[
\cle_{\n} = N_{J_2} \otimes \cdots \otimes N_{J_k},
\]
and $T$ as ${\bf B}_{ij}$ on $N_{J_1} \otimes \cle_{\n}$ for all $i=2, \ldots, k$, and $j=1, \ldots, n_i$. In this case, consequently one may choose a lifting  $\Psi_{ij}$ in $R_{n_1}^\infty \overline{\otimes} \clb(\cle_{\n})$ as the constant multi-analytic operator
\[
Y_{ij} :=I_{F^2_{n_1}} \otimes (I_{N_{J_2}} \otimes \cdots\otimes B_{ij} \otimes \cdots \otimes I_{N_{J_k}}),
\]
for all $i=2, \ldots, k$, and $j=1, \ldots, n_i$. Let $\vp_{ij, \alpha^t}$ be the $\alpha$-th Fourier coefficient of the multi-analytic operator $\Phi_{ij}$ in $\mathscr W(W_1,\ldots,W_n)\overline \otimes \clb(\cle)$, $i=2, \ldots, k$, and $j=1, \ldots, n_i$. Then \eqref{eq-general vp} implies that
\[
\vp_{ij,\alpha^t} = P_{\cle} (S_{n_1} \otimes I_{\cle_{\n}})^{\alpha*} \BB_{ij}|_{\cle} + P_{\cle} (S_{n_1} \otimes I_{\cle_{\n}})^{\alpha*} P_{M_{J}\otimes \cle_*} Y_{ij}|_{\cle}.
\]
Since, $\cle \subseteq N_{J_1} \otimes \cle_{*}$ by Lemma \ref{lemma E subset NJ E}, and $N_{J_1} \otimes \cle_{*}$ is invariant under $Y_{ij}$ by construction, we have that
\[
P_{M_{J}\otimes \cle_*} Y_{ij}|_{\cle} = 0,
\]
and hence
\[
\vp_{ij,\alpha^t} = P_{\cle} (S_{n_1} \otimes I_{\cle_{\n}})^{\alpha*} \BB_{ij}|_{\cle} = P_{\cle} \BB_{n_1}^{\alpha*} \BB_{ij}|_{\cle},
\]
as $\BB_{ij}|_{\cle} = P_{N_{J_1} \otimes \cle_{\n}} \BB_{ij}|_{\cle}$ and $(S_{1q} \otimes I_{\cle_{\n}})^*|_{N_{J_1} \otimes \cle_{\n}} = \BB_{1q}^*$ for all $q=1, \ldots, n_1$. We have thus arrived at the following companion result to Corollary \ref{constrained-submodule}:

\begin{corollary}\label{cor-Fourier coeff of Phi ij}
In the setting of Corollary \ref{constrained-submodule}, for each $\alpha \in F^+_n$, $i=2, \ldots, k$, and $j=1, \ldots, n_i$, the $\alpha$-th Fourier coefficient of the constrained multi-analytic operator $\Phi_{ij}$ is given by
\[
\vp_{ij,\alpha^t} = P_{\cle} \BB_{n_1}^{\alpha*} \BB_{ij}|_{\cle}.
\]
\end{corollary}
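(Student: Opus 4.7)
The plan is to specialize the general two-term formula \eqref{eq-general vp} for Fourier coefficients of a constrained multi-analytic operator (derived in the proof of Theorem \ref{BLHT-variety}) to the tensor-product setting of Corollary \ref{constrained-submodule}, exploiting the fact that in this setting a \emph{canonical} constant lift of $\BB_{ij}$ is available which forces the awkward ``error term'' in that formula to vanish.

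First I would recall the setup: for fixed $i \in \{2, \ldots, k\}$ and $j \in \{1, \ldots, n_i\}$, take $T = \BB_{ij}$ acting on $N_{J_1} \otimes \cle_{\n}$, and observe that any lifting $\Psi \in R^{\infty}_{n_1} \overline{\otimes} \clb(\cle_{\n})$ of $T$ produces, via Theorem \ref{first-BLHT} applied inside the proof of Theorem \ref{BLHT-variety}, a multi-analytic operator $\tilde{\Phi}$ whose Fourier coefficients are $\tilde{\vp}_{\alpha^{t}} = P_{\tilde{\cle}} (S_{n_1} \otimes I_{\cle_{\n}})^{\alpha*} \Psi|_{\tilde{\cle}}$. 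The Fourier coefficients of $\Phi = P_{N_{J_1} \otimes \cle} \tilde{\Phi}|_{N_{J_1} \otimes \cle}$ are then defined by $\vp_{\alpha^{t}} = P_{\cle} \tilde{\vp}_{\alpha^{t}}|_{\cle}$, and combining Lemma \ref{lemma E subset NJ E} (which gives $\cle \subseteq N_{J_1} \otimes \cle_{\n}$) with $\Psi^*|_{N_{J_1} \otimes \cle_{\n}} = T^*$ from \eqref{lifting-of-t} produces the two-term expression \eqref{eq-general vp}.

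The key move is to pick the canonical constant lift
\[
Y_{ij} := I_{F^2_{n_1}} \otimes (I_{N_{J_2}} \otimes \cdots \otimes B_{ij} \otimes \cdots \otimes I_{N_{J_k}}),
\]
of $\BB_{ij}$ and to note that its tensor structure makes $N_{J_1} \otimes \cle_{\n}$ an invariant subspace: the first factor is the identity, while the factor acting on $\cle_{\n}$ visibly preserves $\cle_{\n}$. Since $\cle \subseteq N_{J_1} \otimes \cle_{\n}$ by Lemma \ref{lemma E subset NJ E}, we have $Y_{ij} \cle \subseteq N_{J_1} \otimes \cle_{\n}$, and hence $P_{M_{J_1} \otimes \cle_{\n}} Y_{ij}|_{\cle} = 0$. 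This kills the second term of \eqref{eq-general vp} and collapses the formula to $\vp_{ij,\alpha^{t}} = P_{\cle} (S_{n_1} \otimes I_{\cle_{\n}})^{\alpha*} \BB_{ij}|_{\cle}$.

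Finally I would rewrite $(S_{n_1} \otimes I_{\cle_{\n}})^{\alpha*}$ as $\BB_{n_1}^{\alpha*}$: because $\BB_{ij}|_{\cle}$ takes values in $N_{J_1} \otimes \cle_{\n}$, which is jointly coinvariant under $S_{n_1} \otimes I_{\cle_{\n}}$, and because $(S_{1q} \otimes I_{\cle_{\n}})^*$ restricts on this subspace to $\BB_{1q}^*$ for each $q = 1, \ldots, n_1$, the substitution is legitimate and yields $\vp_{ij,\alpha^{t}} = P_{\cle} \BB_{n_1}^{\alpha*} \BB_{ij}|_{\cle}$. The only non-routine point is recognizing that the tensor-product form of $T$ supplies the lift $Y_{ij}$ which ``stays inside $N_{J_1} \otimes \cle_{\n}$,'' thereby removing the ambiguity caused by the non-uniqueness of the commutant lifting; for a generic $T$ on a non-factor constrained setting, no such canonical simplification is available, and that is precisely the obstacle highlighted at the end of Section \ref{section-variety}.
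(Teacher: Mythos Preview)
Your proposal is correct and follows essentially the same approach as the paper: specialize \eqref{eq-general vp} to the tensor-product setting, choose the constant lift $Y_{ij}$, use Lemma \ref{lemma E subset NJ E} together with the $Y_{ij}$-invariance of $N_{J_1}\otimes\cle_{\n}$ to kill the error term, and then replace $(S_{n_1}\otimes I_{\cle_{\n}})^{\alpha*}$ by $\BB_{n_1}^{\alpha*}$ via coinvariance. The reasoning and the key steps match the paper's argument.
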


Finally, in view of the above corollary, we now turn to analytic representations of $\Phi_{ij} \in \mathscr M (H^2_{n_1}) \overline{\otimes} \clb(\cle)$ in Corollary \ref{corollary-DA cor 1}. So now on, we will be following the setting of Corollary \ref{corollary-DA cor 1}.

Denote by $\sigma$ the symmetrization map from $F_{n_1}^+$ to $\ZP^{n_1}$. Let $i = 2, \ldots, k$ and $j = 1, \ldots, n_i$. Then
\[
\BB_{ij}^{\alpha} = P_{N_{J_1} \otimes \cle_{\n} }(S_{n_1}^{\alpha}\otimes I_{\cle_{\n}})|_{N_{J_1}\otimes \cle_{\n}} = \M_{n_1}^{\sigma(\alpha)},
\]
whence
\[
\BB_{ij}^{\alpha*} = \BW_{ij}^{\alpha} =\M_{n_1}^{\sigma(\alpha)},
\]
for all $\alpha \in F^+_{n_1}$. Then, by Corollary \ref{constrained-submodule} we have
\[
\vp_{ij,\alpha^t} = P_{\cle} (\M_{n_{1}})^{*\m} \M_{z_{ij}}|_{\cle} \quad \quad (\alpha \in F^+_n)
\]
where $\sigma (\alpha) = \m$. Moreover, a standard computation shows, for each $\m \in \ZP^{n_1}$, that
\[
\# \{\alpha \in F^{+}_{n_1} : \sigma(\alpha) = \m\} = \frac{|\m| !}{\m !} := \frac{(\displaystyle\sum_{i=1}^{n_1} m_i)!}{m_1! \cdots m_{n_1}!}.
\]
Then, for each $\z \in \mathbb{B}^{n_1}$ we have
\[
\begin{split}
\Phi_{ij}(\z) & =  \sum_{\m \in \ZP^{n_1}} \frac{|\m| !}{\m !} (P_{\cle} \M_{n_1}^{*\m} \M_{z_{ij}}|_{\cle}) \z^{\m}
\\
&= P_{\cle} \Big( \sum_{t \in \ZP} (\sum_{\overset{\m \in \ZP^{n_1}}{|\m| = t}} \frac{|\m|!}{\m !} \M_{n_1}^{*\m} \M_{z_{ij}} \z^{\m}) \Big)|_{\cle},
\end{split}
\]
and hence
\[
\Phi_{ij}(\z) = P_{\cle} \Big(I - \sum_{m=1}^{n_1} z_m \M_{z_{1m}}^* \Big)^{-1} \M_{z_{ij}}|_{\cle}.
\]
We have proved the following.

\begin{corollary}\label{corollary-DA cor 2}
In the setting of Corollary \ref{corollary-DA cor 1}, for each $i=2, \ldots, k$ and $j=1, \ldots, n_i$, the multiplier $\Phi_{ij} \in \mathscr M (H^2_{n_1}) \overline{\otimes} \clb(\cle)$ can be represented as
\[
\Phi_{ij}(\z) = P_{\cle} \Big(I - \sum_{m=1}^{n_1} z_m \M_{z_{1m}}^* \Big)^{-1} \M_{z_{ij}}|_{\cle} \quad \quad (\z \in \mathbb{B}^{n_1}).
\]
\end{corollary}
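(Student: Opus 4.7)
The plan is to start from Corollary \ref{cor-Fourier coeff of Phi ij}, exploit the commutativity of the constrained creation operators in the Drury--Arveson setting to convert the noncommutative Fourier data into a commutative power series, and then recognize the result as a geometric series.

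First, since $J_1 \subseteq F_{n_1}^\infty$ is generated by the commutators of creation operators, $N_{J_1}$ is the symmetric Fock space, and under the identification $N_{J_1} \simeq H^2_{n_1}$ the constrained left creation operators $\BB_{1,1}, \ldots, \BB_{1,n_1}$ on $N_{J_1} \otimes \cle_\n$ agree with the mutually commuting multiplication tuple $\M_{n_1}$. Hence for any word $\alpha \in F^+_{n_1}$, the product $\BB_{n_1}^{\alpha *}$ depends only on the abelianization $\m := \sigma(\alpha) \in \ZP^{n_1}$ and equals $\M_{n_1}^{*\m}$. Substituting into Corollary \ref{cor-Fourier coeff of Phi ij} yields
\[
\vp_{ij,\alpha^t} = P_\cle \M_{n_1}^{*\m} \M_{z_{ij}}|_\cle \qquad (\sigma(\alpha) = \m).
\]

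Next I would translate from this word-indexed Fourier data to the Taylor expansion of $\Phi_{ij}$ as a multiplier in $\mathscr M(H^2_{n_1}) \overline{\otimes} \clb(\cle)$. Since each multi-index $\m \in \ZP^{n_1}$ is the image under $\sigma$ of exactly $|\m|!/\m!$ words and these all share the common value above, collecting contributions gives the Taylor coefficient of $\z^\m$ as $(|\m|!/\m!)\, P_\cle \M_{n_1}^{*\m} \M_{z_{ij}}|_\cle$. Pulling $P_\cle$ and $\M_{z_{ij}}|_\cle$ outside the sum and applying the multinomial theorem to the commuting family $\{\M_{z_{1m}}^*\}_{m=1}^{n_1}$, the remaining power series collapses to
\[
\sum_{\m \in \ZP^{n_1}}\frac{|\m|!}{\m!}\z^{\m}\M_{n_1}^{*\m} = \sum_{t=0}^\infty \Bigl(\sum_{m=1}^{n_1} z_m \M_{z_{1m}}^*\Bigr)^{t} = \Bigl(I - \sum_{m=1}^{n_1} z_m \M_{z_{1m}}^*\Bigr)^{-1},
\]
convergent strongly for $\z \in \mathbb{B}^{n_1}$ since $\M_{n_1}$ is a row contraction. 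This produces the claimed expression.

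The main technical point is the rigorous passage from the word-indexed noncommutative Fourier coefficients to the multi-index-indexed Taylor coefficients, together with the combinatorial multiplicity $|\m|!/\m!$. This rests on the symmetry property that $\vp_{ij,\alpha^t}$ depends on $\alpha$ only through $\sigma(\alpha)$ in the Drury--Arveson setting---a direct consequence of the commutation relations $\BB_{1,p}\BB_{1,q} = \BB_{1,q}\BB_{1,p}$ that distinguish the constrained (symmetric) from the full Fock setting.
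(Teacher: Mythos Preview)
Your proposal is correct and follows essentially the same route as the paper: invoke Corollary~\ref{cor-Fourier coeff of Phi ij}, use commutativity of $\BB_{n_1}$ in the Drury--Arveson setting so that $\vp_{ij,\alpha^t}$ depends only on $\sigma(\alpha)$, count the $|\m|!/\m!$ words over each multi-index, and sum the resulting geometric series via the multinomial theorem. The paper's argument is identical in structure and detail.
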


\section{An example and concluding remarks}\label{concluding remarks}

Structure of isometries (that is, the von Neumann and Wold decomposition theorem), Beurling, Lax and Halmos theorem, Sarason's commutant lifting theorem, and the Sz.-Nagy and Foias analytic model theory have been inseparable companions in single variable operator theory and function theory. These concepts are increasingly accepted as stepping stones to the development of (both commutative and noncommutative) multivariable operator theory. However, there are a number of interesting and vital results that hold for single bounded linear operators but do not hold in general for commuting and noncommuting $n$-tuples, $n \geq 2$, of operators. Here we aim to present one such example.

First, we recall the noncommutative version of Beurling, Lax and Halmos theorem (see Theorem \ref{BLHT}): Let $\cle_*$ be a Hilbert space and let $\clm$ be a closed subspace of $F^2_n \otimes \cle_*$. Then $\clm$ is a submodule of $F^2_n \otimes \cle_*$ if and only if there exist a Hilbert space $\cle$ and an inner multi-analytic operator $\Theta(R_1, \ldots, R_n) : F^2_n \otimes \cle \rightarrow F^2_n \otimes \cle_*$ such that
\[
\clm = \Theta(R_1, \ldots, R_n) \Big(F^2_n \otimes \cle\Big).
\]
If we assume in addition that $n=1$, then
\[
\mbox{dim} \, \cle \leq \mbox{dim} \, \cle_*.
\]
This inequality plays a crucial role in single variable operator theory. Here, on the contrary, if $n>1$, then we show that such dimension inequality do not hold in general, specifically we construct an inner multi-analytic operator $\Theta(R_1, \ldots, R_n) \in R_n^\infty \overline{\otimes} \clb(\cle, \cle_*)$ such that $\mbox{dim} \, \cle > \mbox{dim} \, \cle_*$. Clearly and necessarily, here one must consider finite dimensional Hilbert spaces $\cle_*$.

\begin{example}
Let $n > 1$ and let $\tilde{\cle}_1, \ldots, \tilde{\cle}_n$ and $\cle_*$ be Hilbert spaces. Suppose
\[
\mbox{dim} \, \tilde{\cle}_i = \mbox{dim} \, \cle_* = m \; (< \infty),
\]
for all $i=1, \ldots, n$, and let
\[
\cle = \bigoplus_{i=1}^n \tilde{\cle}_i.
\]
Let $\{e_{ij}\}_{j=1}^m$ be an orthonormal basis of $\tilde{\cle}_i$, $i=1, \ldots, n$, and let $\{f_j\}_{j=1}^m$ be that of $\cle_*$. Then
\[
\mbox{dim}\; \cle = m n.
\]
Now for each $i=1, \ldots, n$, we define linear operator $\theta_i : \cle \raro \cle_*$ by
\[
\theta_i(e_{pq}) =
\begin{cases} f_q & \mbox{if}~ p=i
\\
0 & \mbox{if}~ p \neq i. \end{cases}
\]
An easy calculation reveals that
\[
\sum_{i=1}^n \theta_i^* \theta_i = I_{\cle}.
\]
Set
\[
\Theta(R_1, \ldots, R_n) = \sum_{i=1}^n R_i \otimes \theta_i.
\]
Clearly
\[
(S_i \otimes I_{\cle_*}) \Theta(R_1, \ldots, R_n) = \Theta(R_1, \ldots, R_n) (S_i \otimes I_{\cle}),
\]
for all $i=1, \ldots, n$, that is, $\Theta(R_1, \ldots, R_n) \in F_n^\infty \overline{\otimes} \clb(\cle, \cle_*)$. Moreover
\[
\Theta(R_1, \ldots, R_n)^* \Theta(R_1, \ldots, R_n) = \sum_{i=1}^n R_i^* R_i \otimes \theta^*_i \theta = I_{F^2_n \otimes \cle},
\]
that is, $\Theta(R_1, \ldots, R_n)$ is an inner multi-analytic operator in $F_n^\infty \overline{\otimes} \clb(\cle, \cle_*)$, where, on the other hand
\[
mn = \mbox{dim} \, \cle > \mbox{dim} \, \cle_* = n.
\]
\end{example}

It is now evident, from the above example point of view, to the least, that extensions of some of the concepts (for instance see the appendix in \cite{MMSS}) of submodules of the Hardy space over polydisc in its full generality is not possible in the context of multivariable (both commutative and noncommutative) operator theory and noncommutative varieties.

All the main results of this paper remain valid if we replace Fock $\n$-modules, constrained Fock $\n$-modules and Drury-Arveson $\n$-modules by the respective vector-valued counterparts and the proofs carry over verbatim.

\vspace{0.1in}

\noindent\textbf{Acknowledgement:}
The research of the second named author is supported by NBHM  (National Board of Higher Mathematics, India) post-doctoral fellowship no: 0204/27-\\/2019/R\&D-II/12966.
The research of the third named author is supported in part by NBHM grant NBHM/R.P.64/2014, and the Mathematical Research Impact Centric Support (MATRICS) grant, File No: MTR/2017/000522 and Core Research Grant, File No: CRG/2019/000908, by the Science and Engineering Research Board (SERB), Department of Science \& Technology (DST), Government of India. The third author also would like to thanks the Institute of Mathematics of the Romanian Academy, Bucharest, Romania, for its warm hospitality during a visit in May 2019.

\end{document}